\newtheorem{thm}{Theorem}[section]
\newtheorem{lem}[thm]{Lemma}
\theoremstyle{definition}
\newtheorem{defn}[thm]{Definition}
\newtheorem{rem}[thm]{Remark}
\numberwithin{equation}{thm}
\begin{document}
\title[ Homotopy cartesian diagrams in $n$-angulated categories ]
{Homotopy cartesian diagrams in $n$-angulated categories }

\author{Zengqiang Lin and Yan Zheng}
\address{ School of Mathematical sciences, Huaqiao University,
Quanzhou\quad 362021,  China.} \email{zqlin@hqu.edu.cn}
\address{ School of Mathematical sciences, Huaqiao University,
Quanzhou\quad 362021,  China.} \email{zy112291@163.com}

\thanks{This work was supported  by  the Science Foundation of Huaqiao University (Grants No. 2014KJTD14)}

\subjclass[2010]{ 18E30, 18E10}

\keywords{ $n$-angulated category;  homotopy cartesian; mapping cone axiom.}

\begin{abstract} 
It has been proved by Bergh and Thaule that  the higher mapping cone axiom is equivalent to the higher octahedral axiom for $n$-angulated categories. In this note, we use homotopy cartesian diagrams to give several new equivalent statements of the higher mapping cone axiom, which are applied to explain the higher octahedral axiom.
\end{abstract}

\maketitle

\section{introduction}

Let $n$ be an integer greater than or equal to three. The notion of $n$-angulated category is introduced by Geiss, Keller and Oppermann in \cite{[GKO]} as the axiomatization of some $(n-2)$-cluster tilting subcategories of  triangulated categories. In particular, a 3-angulated category is the classical triangulated category. The examples of $n$-angulated categories can be found in \cite{[GKO],[BJT]}. Bergh and Thaule discussed the axioms of $n$-angulated categories  systematically \cite{[BT1]}. They showed that for $n$-angulated categories, the higher mapping cone axiom is equivalent to the higher octahedral axiom. It is proved in \cite{[ABT]} that the morphism axiom is redundant for $n$-angulated categories.

The first aim and motivation of this note is to understand the higher octahedral axiom. The $n$-angle induced by the higher octahedral axiom seems very mysterious, because it involves a lot of objects and morphisms. How do they behave together? 
What are the morphisms of $n$-angles we can infer from the higher octahedral axiom?

 The second motivation is to discuss other equivalent statements of the higher mapping cone axiom. As we all know, there are quite a few equivalent statements of octahedral axiom such as homotopy cartesian axiom, cobase change axiom and so on, which are used to construct triangles under varied conditions. Can we get their higher versions?

It will turn out that homotopy cartesian diagrams provide a useful method to achieve our two goals. Actually, the notion of homotopy cartesian diagrams in $n$-angulated categories is first introduced in \cite{[BT1]}. For triangulated case, we can find homotopy cartesian square in \cite{[PS],[Ne],[Kr]}. Since a homotopy cartesian square is the triangulated analogue of pushout and pullback square in an abelian category, inspired by the definition of $n$-pushout and $n$-pullback diagrams in $n$-abelian categories \cite{[J]}, 
we give an equivalent definition of homotopy cartesian diagrams to avoid dealing with the symbols $(-1)^i$ in the $n$-angle (see Remark \ref{2.1} for details). Then we prove that the higher mapping cone axiom is equivalent to the higher homotopy cartesian axiom, which is implied but not state explicitly in \cite{[BT1]}. At last, we will use homotopy cartesian diagrams to give several other somewhat new equivalent statements of the higher mapping cone axiom.

This note is organized as follows. In Section 2, we first recall the definition of $n$-angulated categories,  then introduce the notion of homotopy cartesian diagrams  and provide some needed facts. In Section 3, we use homotopy cartesian diagrams to give several new equivalent statements of the higher mapping cone axiom.

\section{$n$-angulated categories and homotopy cartesian diagrams }
For convenience, we recall the definition of $n$-angulated categories from \cite{[GKO]}.
Let $\mathcal{C}$ be an additive category equipped with an automorphism $\Sigma:\mathcal{C}\rightarrow\mathcal{C}$. 
 An $n$-$\Sigma$-$sequence$ in $\mathcal{C}$ is a sequence of morphisms
$$X_\bullet= (X_1\xrightarrow{f_1}X_2\xrightarrow{f_2}X_3\xrightarrow{f_3}\cdots\xrightarrow{f_{n-1}}X_n\xrightarrow{f_n}\Sigma X_1).$$
Its $left\ rotation$ is the $n$-$\Sigma$-sequence
$$X_2\xrightarrow{f_2}X_3\xrightarrow{f_3}X_4\xrightarrow{f_4}\cdots\xrightarrow{f_{n-1}}X_n\xrightarrow{f_n}\Sigma X_1\xrightarrow{(-1)^n\Sigma f_1}\Sigma X_2.$$  
An $n$-$\Sigma$-sequence $X_\bullet$ is $exact$ if the induced sequence
$$\cdots\rightarrow \mbox{Hom}_\mathcal{C}(-,X_1)\rightarrow \mbox{Hom}_\mathcal{C}(-,X_2)\rightarrow\cdots\rightarrow \mbox{Hom}_\mathcal{C}(-,X_n)\rightarrow \mbox{Hom}_\mathcal{C}(-,\Sigma X_1)\rightarrow\cdots$$
is exact. 
A $morphism$ of $n$-$\Sigma$-sequences is a sequence of morphisms $\varphi_\bullet=(\varphi_1,\varphi_2,\cdots,\varphi_n)$ such that the following diagram
$$\xymatrix{
X_1 \ar[r]^{f_1}\ar[d]^{\varphi_1} & X_2 \ar[r]^{f_2}\ar[d]^{\varphi_2} & X_3 \ar[r]^{f_3}\ar[d]^{\varphi_3} & \cdots \ar[r]^{f_{n-1}}& X_n \ar[r]^{f_n}\ar[d]^{\varphi_n} & \Sigma X_1 \ar[d]^{\Sigma \varphi_1}\\
Y_1 \ar[r]^{g_1} & Y_2 \ar[r]^{g_2} & Y_3 \ar[r]^{g_3} & \cdots \ar[r]^{g_{n-1}} & Y_n \ar[r]^{g_n}& \Sigma Y_1\\
}$$
commutes where each row is an $n$-$\Sigma$-sequence. It is an {\em isomorphism} if $\varphi_1, \varphi_2, \cdots, \varphi_n$ are all isomorphisms in $\mathcal{C}$.

\begin{defn} (\cite{[GKO]}) Let $\mathcal{C}$ be an additive category, $\Sigma$ an automorphism of $\mathcal{C}$ and $\Theta$ a collection of $n$-$\Sigma$-sequences. We call $(\mathcal{C},\Sigma,\Theta)$ a $pre$-$n$-$angulated\ category$ and call the elements of $\Theta$ $n$-$angles$ if $\Theta$ satisfies the following axioms:

(N1) (a) $\Theta$ is closed under isomorphisms, direct sums and direct summands.

(b) For each object $X\in\mathcal{C}$ the trivial sequence
$$X\xrightarrow{1}X\rightarrow 0\rightarrow\cdots\rightarrow 0\rightarrow \Sigma X$$
belongs to $\Theta$.

(c) For each morphism $f_1:X_1\rightarrow X_2$ in $\mathcal{C}$, there exists an $n$-$\Sigma$-sequence in $\Theta$ whose first morphism is $f_1$.

(N2) An $n$-$\Sigma$-sequence belongs to $\Theta$ if and only if its left rotation belongs to $\Theta$.

(N3) Each commutative diagram
$$\xymatrix{
X_1 \ar[r]^{f_1}\ar[d]^{\varphi_1} & X_2 \ar[r]^{f_2}\ar[d]^{\varphi_2} & X_3 \ar[r]^{f_3}\ar@{-->}[d]^{\varphi_3} & \cdots \ar[r]^{f_{n-1}}& X_n \ar[r]^{f_n}\ar@{-->}[d]^{\varphi_n} & \Sigma X_1 \ar[d]^{\Sigma \varphi_1}\\
Y_1 \ar[r]^{g_1} & Y_2 \ar[r]^{g_2} & Y_3 \ar[r]^{g_3} & \cdots \ar[r]^{g_{n-1}} & Y_n \ar[r]^{g_n}& \Sigma Y_1\\
}$$ with rows in $\Theta$ can be completed to a morphism of  $n$-$\Sigma$-sequences.

 If $\Theta$ moreover satisfies the following axiom, then  $(\mathcal{C},\Sigma,\Theta)$ is called an $n$-$angulated$ $category$:

(N4) In the situation of (N3), the morphisms $\varphi_3, \varphi_4, \cdots,\varphi_n$ can be chosen such that the mapping cone
$$X_2\oplus Y_1\xrightarrow{\left(
                              \begin{smallmatrix}
                                -f_2 & 0 \\
                                \varphi_2 & g_1 \\
                              \end{smallmatrix}
                            \right)}
 X_3\oplus Y_2 \xrightarrow{\left(
                              \begin{smallmatrix}
                                -f_3 & 0 \\
                                \varphi_3 & g_2 \\
                              \end{smallmatrix}
                            \right)}
 \cdots \xrightarrow{\left(
                            \begin{smallmatrix}
                               -f_n & 0 \\
                                \varphi_n & g_{n-1} \\
                             \end{smallmatrix}
                           \right)}
 \Sigma X_1\oplus Y_n \xrightarrow{\left(
                              \begin{smallmatrix}
                                -\Sigma f_1 & 0 \\
                                \Sigma\varphi_1 & g_n \\
                              \end{smallmatrix}
                            \right)}
 \Sigma X_2\oplus \Sigma Y_1 \\
$$
belongs to $\Theta$.
\end{defn}

The following theorem means that for $n$-angulated categories the higher mapping cone axiom is equivalent to the higher octahedral axiom.

\begin{thm} (\cite[Theorem 4.4]{[BT1]})
 Let $(\mathcal{C},\Sigma,\Theta)$ be a pre-$n$-angulated category. Then $\Theta$ satisfies (N4) if and only if $\Theta$ satisfies (N4$^*$):

Given the following commutative diagram
 $$\xymatrix{
X_1 \ar[r]^{f_1}\ar@{=}[d] & X_2 \ar[r]^{f_2}\ar[d]_{\varphi_2} & X_3 \ar[r]^{f_3}\ar@{-->}[d]_{\varphi_3} & X_4 \ar[r]^{f_4}\ar@{-->}[ldd]^{\phi_4}\ar@{-->}[d]^{\varphi_4} & \cdots \ar[r]^{f_{n-2}} & X_{n-1} \ar[r]^{f_{n-1}} \ar@{-->}[d]_{\varphi_{n-1}} & X_n \ar[r]^{f_n}\ar@{-->}[d]^{\varphi_n}\ar@{-->}[ldd]^{\phi_n} & \Sigma X_1 \ar@{=}[d]\\
X_1 \ar[r]^{g_1}\ar[d]^{f_1} & Y_2 \ar[r]^{g_2}\ar@{=}[d] & Y_3 \ar[r]^{g_3\ \ }\ar@{-->}[d]_{\psi_3} &Y_4 \ar[r]^{g_4}\ar@{-->}[d]^{\psi_4} & \cdots\ar[r]^{g_{n-2}} & Y_{n-1} \ar[r]^{g_{n-1}\ \ \ }\ar@{-->}[d]_{\psi_{n-1}} & Y_n \ar[r]^{g_n}\ar@{-->}[d]^{\psi_n}& \Sigma X_1\ar[d]^{\Sigma f_1}\\
X_2 \ar[r]^{\varphi_2} & Y_2 \ar[r]^{h_2} & Z_3 \ar[r]^{h_3} &Z_4 \ar[r]^{h_4} & \cdots \ar[r]^{h_{n-2}} & Z_{n-1} \ar[r]^{h_{n-1}} & Z_n \ar[r]^{h_n}& \Sigma X_2\\
}$$ whose rows are $n$-angles, there exist morphisms $\varphi_i: X_i\rightarrow Y_i$ for $3\leq i\leq n$, $\psi_j:Y_j\rightarrow Z_j$ for $3\leq j\leq n$ and $\phi_k:X_k\rightarrow Z_{k-1}$ for $4\leq k\leq n$ such that each square in the above diagram commutes and the following $n$-$\Sigma$-sequence
$$
 X_3\xrightarrow{\left(
             \begin{smallmatrix}
             f_3 \\
             \varphi_3\\
             \end{smallmatrix}
           \right)} X_4\oplus Y_3 \xrightarrow{\left(
             \begin{smallmatrix}
             -f_4 & 0 \\
             \varphi_4 & -g_3\\
              \phi_4 & \psi_3\\
             \end{smallmatrix}
           \right)}  X_5\oplus Y_4\oplus Z_3\xrightarrow{\left(
             \begin{smallmatrix}
             -f_5 & 0 & 0 \\
             -\varphi_5 & -g_4 & 0 \\
             \phi_5 & \psi_4 & h_3\\
             \end{smallmatrix}
           \right)} X_6\oplus Y_5\oplus Z_4$$
 $$ \xrightarrow{\left(
             \begin{smallmatrix}
             -f_6 & 0 & 0 \\
             \varphi_6 & -g_5 & 0 \\
             \phi_6 & \psi_5 & h_4\\
             \end{smallmatrix}
           \right)}  \cdots  \xrightarrow{\left(
             \begin{smallmatrix}
             -f_{n-1} & 0 & 0 \\
             (-1)^{n-1}\varphi_{n-1} & -g_{n-2} & 0 \\
             \phi_{n-1} & \psi_{n-2} & h_{n-3}\\
             \end{smallmatrix}
           \right)} X_n\oplus Y_{n-1}\oplus Z_{n-2}$$
 $$\begin{gathered}\xrightarrow{\left(
             \begin{smallmatrix}
             (-1)^n\varphi_{n} & -g_{n-1} & 0 \\
             \phi_{n} & \psi_{n-1} & h_{n-2}\\
             \end{smallmatrix}
           \right)}  Y_n\oplus Z_{n-1}\xrightarrow{\left(
             \begin{smallmatrix}
             \psi_n & h_{n-1} \\
             \end{smallmatrix}
           \right)} Z_n \xrightarrow{\Sigma f_2\cdot h_n}  \Sigma X_3 \ \ \ \ \ \ \ \ \ \ \ \ \ \ \ \ (2.1)
 \end{gathered}$$ belongs to $\Theta$.
\end{thm}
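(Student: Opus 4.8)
The plan is to establish the two implications separately, using as the only substantial tools the fill-in axiom (N3), the closure properties (N1)(a) and (b), and the mapping cone axiom (N4) itself; the guiding idea is to realise the sequence (2.1) as a genuine mapping cone and then strip off contractible summands, so that no appeal to any uniqueness of cones is needed.

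\textbf{From (N4) to (N4$^*$).} The first two rows, together with the vertical maps $\mathrm{id}_{X_1}$ and $\varphi_2$, form a commutative square precisely because $g_1=\varphi_2 f_1$; so (N3) completes them to a morphism of $n$-angles $(\mathrm{id}_{X_1},\varphi_2,\varphi_3,\dots,\varphi_n)$ and, by (N4), the $\varphi_i$ may be chosen so that the mapping cone $M_\bullet$, whose $i$-th term is $X_{i+1}\oplus Y_i$ (with $Y_1=X_1$), lies in $\Theta$. The key observation is that there is a morphism of $n$-angles $\theta_\bullet\colon M_\bullet\to(\text{third row})$ whose first two components are $\theta_1=(1,\,f_1)\colon X_2\oplus X_1\to X_2$ and $\theta_2=(0,\,1)\colon X_3\oplus Y_2\to Y_2$; the required commutativity $\varphi_2\theta_1=\theta_2\cdot(\text{first map of }M_\bullet)$ is again exactly $g_1=\varphi_2 f_1$. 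I would fill in $\theta_3,\dots,\theta_n$ by (N3) and read off the octahedral data from their components: the $Y_i\to Z_i$ part of $\theta_i$ is $\psi_i$, and the $X_{i+1}\to Z_i$ part of $\theta_i$ is (up to sign) $\phi_{i+1}$. Applying (N4) to $\theta_\bullet$ then puts the mapping cone of $\theta_\bullet$ in $\Theta$, and a term-by-term count shows this cone is isomorphic, as an $n$-$\Sigma$-sequence, to the direct sum of (2.1) with the three trivial $n$-angles on $X_1$, $X_2$ and $Y_2$ (these appear because $\theta_1$ and $\theta_2$ carry identity blocks). Closure under direct summands (N1)(a) then yields $(2.1)\in\Theta$, while the commutativity of every square in the octahedral diagram falls out of $\theta_\bullet$ being a morphism of $n$-angles.

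\textbf{From (N4$^*$) to (N4).} Conversely I would run this construction backwards. Given the data of (N4), a morphism of $n$-angles whose mapping cone we wish to place in $\Theta$, the commuting first square reads $\varphi_2 f_1=g_1\varphi_1$, so the composable pair $X_1\xrightarrow{f_1}X_2\xrightarrow{\varphi_2}Y_2$ together with $n$-angles on $f_1$, on $\varphi_2 f_1$, and on $\varphi_2$ (supplied by (N1)(c) and the given rows) feeds into (N4$^*$) and produces a sequence of type (2.1). Reversing the summand analysis of the forward direction and re-attaching the same trivial $n$-angles by (N1)(a), which keeps us inside $\Theta$ since direct sums of $n$-angles are $n$-angles, recovers the mapping cone. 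The case of a general first component $\varphi_1$ is then reduced to $\varphi_1=\mathrm{id}_{X_1}$ by an auxiliary octahedron on the pair $X_1\xrightarrow{\varphi_1}Y_1\xrightarrow{g_1}Y_2$, which shares the composite $X_1\to Y_2$ with the previous one, and by splicing the two resulting sequences (2.1).

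\textbf{Main obstacle.} The conceptual skeleton above is short, but the genuine work lies in the term-by-term identification of the two mapping cones with (2.1) plus trivial summands. This amounts to producing explicit change-of-basis isomorphisms of $n$-$\Sigma$-sequences that decouple the contractible pieces while tracking the signs $(-1)^i$ built into the rotation and cone conventions, and verifying that the off-diagonal blocks of the cone differential assemble into precisely the matrices displayed in (2.1). The delicate structural point appears in the converse: because for $n>3$ an $n$-angle is not determined up to isomorphism by its first morphism, one cannot simply ``identify cones'', so both the reduction to $\varphi_1=\mathrm{id}_{X_1}$ and the splitting of the trivial summands must be carried out by honest isomorphisms rather than by any uniqueness argument, and arranging the maps $\phi_k$ to be simultaneously compatible with all three $n$-angles is where most of the care is required.
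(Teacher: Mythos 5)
Your forward half is sound in outline and essentially reproduces the known argument: note the paper itself quotes this theorem from \cite{[BT1]} without proof, but its own machinery reconstructs exactly your direction (N4)\,$\Rightarrow$\,(N4$^*$) --- Theorem~\ref{thm1} is ``apply (N4) and strip the trivial summand via Lemma~\ref{1} and (N1)(a)'', and Theorem~\ref{thm3} with the remark following it performs your two-step cone-of-a-cone construction and the sign change producing (2.1). Your count of the contractible pieces (trivial $n$-angles on $X_1$, $X_2$, $Y_2$) is correct, and reading $\psi_j$ and $\phi_k$ off the components of $\theta_\bullet$ does yield the commutativities, so that half is fine modulo the deferred sign bookkeeping.

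The converse, however, has a genuine gap at the step ``reversing the summand analysis \dots recovers the mapping cone''. From (N4$^*$) you get the commutativities and $(2.1)\in\Theta$; what (N4) demands is $C(\varphi_\bullet)\in\Theta$, equivalently (Lemma~\ref{1} plus summand closure) that the reduced cone $Z_\bullet=\bigl(X_2\to X_3\oplus Y_2\to\cdots\to X_n\oplus Y_{n-1}\to Y_n\to\Sigma X_2\bigr)$ lies in $\Theta$. Re-attaching the trivial angles to (2.1) by (N1)(a) produces a sequence isomorphic to the cone of your $\theta_\bullet\colon M_\bullet\to N_\bullet$, but membership of a \emph{cone} in $\Theta$ says nothing about its \emph{source}: none of (N1), (N2), (N3), (N4$^*$) lets you ``de-cone'', and $M_\bullet\cong Z_\bullet\oplus X'_\bullet\in\Theta$ is precisely the conclusion you are after --- in your forward direction it was an input, supplied by (N4), so the reversal is circular. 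The paper makes the distinction visible: (N4-4)\,$\Rightarrow$\,(N4-1) in Theorem~\ref{thm3} is ``clear'' only because diagram (3.1) \emph{asserts} that its intermediate rows, in particular the reduced cone (its third row), are $n$-angles; (N4$^*$) asserts only $(2.1)\in\Theta$, and $Z_\bullet$ is not a direct summand of (2.1) plus trivial angles, so no amount of honest isomorphisms and summand-splitting extracts it. Closing the gap needs a different mechanism: show the candidate $Z_\bullet$ is an exact $n$-$\Sigma$-sequence (mapping cones of morphisms of exact sequences are exact, as used in the proof of Lemma~\ref{2}) and compare it by a weak isomorphism with an honest $n$-angle built from (2.1), concluding by Lemma~\ref{lem0}(b); this is how the actual converse runs in \cite{[BT1]}. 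The same objection hits your reduction from general $\varphi_1$ to $\varphi_1=\mathrm{id}$ by ``splicing two sequences (2.1)''; compare the converse half of Theorem~\ref{thm1}, where general $\varphi_1$ is handled instead by the direct-sum endpoint trick (replacing the rows by $X_\bullet$ plus a trivial summand and an $n$-angle isomorphic to $Y_\bullet$ plus a rotated trivial one, both running from $X_1\oplus Y_1$ to $\Sigma(X_1\oplus Y_1)$), which never needs to invert a cone.
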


In the rest of this section, we assume that all $n$-angles are in a pre-$n$-angulated category $(\mathcal{C},\Sigma,\Theta)$.

\begin{defn} (cf. \cite{[BT1]})
 The following commutative diagram
$$\xymatrix{
 X_1 \ar[r]^{f_1}\ar[d]^{\varphi_1} & X_2 \ar[r]^{f_2}\ar[d]^{\varphi_2} & \cdots \ar[r]^{f_{n-3}}& X_{n-2}\ar[r]^{f_{n-2}}\ar[d]^{\varphi_{n-2}}& X_{n-1} \ar[d]^{\varphi_{n-1}} \\
 Y_1 \ar[r]^{g_1} & Y_2 \ar[r]^{g_2} & \cdots\ar[r]^{g_{n-3}}& Y_{n-2} \ar[r]^{g_{n-2}} & Y_{n-1} \\
}$$ in a pre-$n$-angulated category $\mathcal{C}$ is called a $homotopy\ cartesian\ diagram$ if the following $n$-$\Sigma$-sequence
$$\xymatrixcolsep{4.5pc}\xymatrix{
X_1\ar[r]^{\left(
             \begin{smallmatrix}
               -f_1 \\
               \varphi_1 \\
             \end{smallmatrix}
           \right)
}& X_2\oplus Y_1\ar[r]^{\left(
             \begin{smallmatrix}
              -f_2 & 0 \\
               \varphi_2 & g_1 \\
             \end{smallmatrix}
           \right)}& X_3\oplus Y_2\ar[r]^{\left(
             \begin{smallmatrix}
              -f_3 & 0 \\
               \varphi_3 & g_2 \\
             \end{smallmatrix}
           \right)}& \cdots\\
}$$
$$\begin{gathered}\xymatrixcolsep{5pc}\xymatrix{
\cdots\ar[r]^{\left(
             \begin{smallmatrix}
              -f_{n-2} & 0 \\
               \varphi_{n-2} & g_{n-3} \\
             \end{smallmatrix}
           \right)\ \ \ \ }& X_{n-1}\oplus Y_{n-2}\ar[r]^{\ \ \ \left(
             \begin{smallmatrix}
               \varphi_{n-1} & g_{n-2} \\
             \end{smallmatrix}
           \right)}& Y_{n-1} \ar[r]^\partial& \Sigma X_1
}\end{gathered}\eqno{(2.2)}$$
is an $n$-angle, where $\partial$ is called a $differential$.
\end{defn}

\begin{rem}\label{2.1}

(a) The $n$-angle (2.2) in the definition of homotopy cartesian diagram is slightly different from the one in the definition given in \cite{[BT1]}. But the two $n$-angles are isomorphic:
$$\xymatrixcolsep{4.5pc}\xymatrixrowsep{2.7pc}\xymatrix{
X_1\ar[r]^{\left(
             \begin{smallmatrix}
               -f_1 \\
               \varphi_1 \\
             \end{smallmatrix}
           \right)
}\ar@{=}[d]& X_2\oplus Y_1\ar[r]^{\left(
             \begin{smallmatrix}
              -f_2 & 0 \\
               \varphi_2 & g_1 \\
             \end{smallmatrix}
           \right)}\ar@{=}[d]& X_3\oplus Y_2\ar[r]^{\left(
             \begin{smallmatrix}
              -f_3 & 0 \\
               \varphi_3 & g_2 \\
             \end{smallmatrix}
           \right)}\ar[d]^{\left(
             \begin{smallmatrix}
              -1 & 0 \\
               0 & 1 \\
             \end{smallmatrix}
           \right)}& \cdots\\
X_1\ar[r]^{\left(
             \begin{smallmatrix}
               -f_1 \\
               \varphi_1 \\
             \end{smallmatrix}
           \right)
}& X_2\oplus Y_1\ar[r]^{\left(
             \begin{smallmatrix}
              f_2 & 0 \\
               \varphi_2 & g_1 \\
             \end{smallmatrix}
           \right)}& X_3\oplus Y_2\ar[r]^{\left(
             \begin{smallmatrix}
              f_3 & 0 \\
               -\varphi_3 & g_2 \\
             \end{smallmatrix}
           \right)}& \cdots\\
}$$
$$\xymatrixcolsep{5.5pc}\xymatrixrowsep{2.5pc}\xymatrix{
\cdots\ar[r]^{\left(
             \begin{smallmatrix}
              -f_{n-2} & 0 \\
               \varphi_{n-2} & g_{n-3} \\
             \end{smallmatrix}
           \right)}& X_{n-1}\oplus Y_{n-2}\ar[r]^{\left(
             \begin{smallmatrix}
               \varphi_{n-1} & g_{n-2} \\
             \end{smallmatrix}
           \right)}\ar[d]^{\left(
             \begin{smallmatrix}
              (-1)^{n+1} & 0 \\
              0 & 1 \\
             \end{smallmatrix}
           \right)} & Y_{n-1} \ar[r]^\partial \ar@{=}[d] & \Sigma X_1 \ar@{=}[d]\\
\cdots\ar[r]^{\left(
             \begin{smallmatrix}
              f_{n-2} & 0 \\
               (-1)^n\varphi_{n-2} & g_{n-3} \\
             \end{smallmatrix}
           \right)}& X_{n-1}\oplus Y_{n-2}\ar[r]^{\left(
             \begin{smallmatrix}
               (-1)^{n+1}\varphi_{n-1} & g_{n-2} \\
             \end{smallmatrix}
           \right)}& Y_{n-1} \ar[r]^\partial& \Sigma X_1
}$$

(b) Since a homotopy cartesian square is the triangulated analogue of a pullback and pushout square in an abelian category, we can compare our definition with the notion of $n$-pushout and $n$-pullback diagram in $n$-abelian categories \cite{[J]}.
\end{rem}

\begin{lem}\label{1}
Let $$\xymatrix{
X_\bullet\ar[d]^{\varphi_\bullet} & X_1 \ar[r]^{f_1}\ar@{=}[d] & X_2 \ar[r]^{f_2}\ar[d]^{\varphi_2} & X_3 \ar[r]^{f_3}\ar[d]^{\varphi_3} & \cdots \ar[r]^{f_{n-1}} & X_n \ar[r]^{f_n} \ar[d]^{\varphi_{n}} & \Sigma X_1 \ar@{=}[d]\\
Y_\bullet & X_1 \ar[r]^{g_1} & Y_2 \ar[r]^{g_2} & Y_3 \ar[r]^{g_3} & \cdots \ar[r]^{g_{n-1}} & Y_n \ar[r]^{g_n}& \Sigma X_1\\
}$$ be a commutative diagram where each row is an $n$-angle. Then as an $n$-$\Sigma$-sequence, the mapping cone $C(\varphi_\bullet)$ is isomorphic to the direct sum of $Z_\bullet$ and $X'_\bullet$, where
$$Z_\bullet=(X_2\xrightarrow{\left(
             \begin{smallmatrix}
               -f_2\\
                \varphi_2\\
             \end{smallmatrix}
           \right)}X_3\oplus Y_2\xrightarrow{\left(
             \begin{smallmatrix}
               -f_3 & 0 \\
               \varphi_3 & g_2\\
             \end{smallmatrix}
           \right)}\cdots\xrightarrow{\left(
             \begin{smallmatrix}
               -f_{n-1} & 0 \\
               \varphi_{n-1} & g_{n-2}\\
             \end{smallmatrix}
           \right)} X_n\oplus Y_{n-1}$$
$$\xrightarrow{(\varphi_n\ g_{n-1})} Y_n\xrightarrow{\Sigma f_1\cdot g_n}\Sigma X_2), \ \ X_\bullet'=(X_1\rightarrow 0\rightarrow\cdots\rightarrow 0\rightarrow \Sigma X_1\xrightarrow{1} \Sigma X_1).$$
In particular, $C(\varphi_\bullet)$ is exact if and only if $Z_\bullet$ is exact.
\end{lem}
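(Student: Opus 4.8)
The plan is to exhibit an explicit isomorphism of $n$-$\Sigma$-sequences $\Phi_\bullet=(\Phi_1,\dots,\Phi_n)\colon C(\varphi_\bullet)\to Z_\bullet\oplus X_\bullet'$. Writing both sequences out term by term, one sees that they have the same object in every position except position $n$: the cone has $\Sigma X_1\oplus Y_n$ there, while the direct sum has $Y_n\oplus\Sigma X_1$; and in position $1$ both equal $X_2\oplus X_1$, but the two outgoing differentials differ in their $Y_1=X_1$ column (the cone carries $g_1$, the direct sum carries $0$). Accordingly I expect $\Phi_\bullet$ to be the identity in the middle and to be nontrivial only at the two ends, absorbing the discrepancies that come from the identity components $\varphi_1=\mathrm{id}$ and $\Sigma\varphi_1=\mathrm{id}$.

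Concretely, first I would record the relations that make the bookkeeping work: from the commutativity of the given diagram (together with $\varphi_1=\mathrm{id}$ and $\Sigma\varphi_1=\mathrm{id}$) one has $g_1=\varphi_2 f_1$ and $f_n=g_n\varphi_n$, while from (N1)(b) and (N3) every $n$-angle has vanishing consecutive compositions, so in particular $f_2 f_1=0$ and $g_n g_{n-1}=0$. Then I would set $\Phi_1=\left(\begin{smallmatrix}1&f_1\\0&1\end{smallmatrix}\right)$, $\Phi_i=\mathrm{id}$ for $2\le i\le n-1$, and $\Phi_n=\left(\begin{smallmatrix}0&1\\1&g_n\end{smallmatrix}\right)\colon \Sigma X_1\oplus Y_n\to Y_n\oplus\Sigma X_1$. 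Each $\Phi_i$ is visibly invertible (the two end matrices are unitriangular up to a swap), so $\Phi_\bullet$ is an isomorphism in every degree; it then remains only to check that each square commutes.

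The verification splits into three nontrivial squares and a block of trivial ones. The interior squares ($2\le i\le n-2$) commute on the nose, because there the differentials of the cone and of $Z_\bullet$ literally coincide and the vertical maps are identities. The first square reduces to the identity $w_1\Phi_1=d_1$, which holds because $-f_2 f_1=0$ and $\varphi_2 f_1=g_1$ restore exactly the $g_1$ entry. The square into position $n$ uses $\Phi_n$: the composite $\Phi_n d_{n-1}$ produces the off-diagonal entry $-f_n+g_n\varphi_n=0$ and the entry $g_n g_{n-1}=0$, so it matches $w_{n-1}$. Finally the wrap-around square, involving $\Sigma\Phi_1$ in degree $n+1$, commutes because of the cancellation $-\Sigma f_1+\Sigma f_1=0$. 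I expect this simultaneous matching of the first, the last, and the wrap-around squares to be the only real point: the two end-matrices must be chosen so that all three cancellations occur at once, and they are forced by precisely the relations $g_1=\varphi_2 f_1$ and $f_n=g_n\varphi_n$ together with the vanishing of consecutive compositions.

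For the final assertion, I would note that $X_\bullet'$ is exact as an $n$-$\Sigma$-sequence: the only nonzero part of the induced sequence $\mathrm{Hom}_{\mathcal C}(U,X_\bullet')$ is the identity $\mathrm{Hom}_{\mathcal C}(U,\Sigma X_1)\xrightarrow{\mathrm{id}}\mathrm{Hom}_{\mathcal C}(U,\Sigma X_1)$ surrounded by zeros, which is exact. Since $\mathrm{Hom}_{\mathcal C}(U,-)$ carries the direct sum $Z_\bullet\oplus X_\bullet'$ to the direct sum of the two induced sequences of abelian groups, and a direct sum of sequences is exact if and only if both summands are, the always-exact contribution of $X_\bullet'$ drops out; hence $C(\varphi_\bullet)$ is exact if and only if $Z_\bullet$ is. This last step is routine once the isomorphism is in hand.
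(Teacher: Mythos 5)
Your proposal is correct and is essentially identical to the paper's own proof: the paper exhibits exactly the same isomorphism, namely $\left(\begin{smallmatrix}1&f_1\\0&1\end{smallmatrix}\right)$ at the first term, identities in the middle, $\left(\begin{smallmatrix}0&1\\1&g_n\end{smallmatrix}\right)\colon \Sigma X_1\oplus Y_n\to Y_n\oplus\Sigma X_1$ at the $n$-th term, and $\left(\begin{smallmatrix}1&\Sigma f_1\\0&1\end{smallmatrix}\right)=\Sigma\Phi_1$ at the wrap-around, with commutativity resting on the same relations $g_1=\varphi_2 f_1$, $g_n\varphi_n=f_n$, and the vanishing of consecutive composites. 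The concluding reduction of exactness to the summand $Z_\bullet$ likewise matches the paper, which dismisses it as clear.
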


\begin{proof}
It is easy to check that we have the following commutative diagram
$$\xymatrixcolsep{4pc}\xymatrixrowsep{2.5pc}\xymatrix{
X_2\oplus X_1\ar[r]^{\left(
             \begin{smallmatrix}
               -f_2 & 0\\
                \varphi_2 & g_1\\
             \end{smallmatrix}
           \right)}\ar[d]^{\left(
             \begin{smallmatrix}
               1 & f_1\\
               0 & 1\\
             \end{smallmatrix}
           \right)} & X_3\oplus Y_2\ar[r]^{\left(
             \begin{smallmatrix}
               -f_3 & 0 \\
               \varphi_3 & g_2\\
             \end{smallmatrix}
           \right)}\ar@{=}[d] & \cdots \\
X_2\oplus X_1\ar[r]^{\left(
             \begin{smallmatrix}
               -f_2 & 0\\
                \varphi_2 & 0\\
             \end{smallmatrix}
           \right)} & X_3\oplus Y_2\ar[r]^{\left(
             \begin{smallmatrix}
               -f_3 & 0 \\
               \varphi_3 & g_2\\
             \end{smallmatrix}
           \right)} & \cdots \\ }$$

$$\xymatrixcolsep{4.2pc}\xymatrixrowsep{2.5pc}\xymatrix{ \cdots\ar[r]^{\left(
             \begin{smallmatrix}
               -f_{n-1} & 0 \\
               \varphi_{n-1} & g_{n-2}\\
             \end{smallmatrix}
           \right)\ \ \ \ } &  X_n\oplus Y_{n-1}\ar[r]^{\left(
             \begin{smallmatrix}
               -f_n & 0 \\
               \varphi_n & g_{n-1}\\
             \end{smallmatrix}
           \right)}\ar@{=}[d] & \Sigma X_1\oplus Y_n\ar[r]^{\left(
             \begin{smallmatrix}
               -\Sigma f_1 & 0 \\
               1 & g_n\\
             \end{smallmatrix}
           \right)}\ar[d]^{\left(
             \begin{smallmatrix}
              0 & 1 \\
               1 & g_n\\
             \end{smallmatrix}
           \right)} & \Sigma X_2\oplus\Sigma X_1\ar[d]^{\left(
             \begin{smallmatrix}
              1 & \Sigma f_1 \\
               0 & 1\\
             \end{smallmatrix}
           \right)}\\
\cdots\ar[r]^{\left(
             \begin{smallmatrix}
               -f_{n-1} & 0 \\
               \varphi_{n-1} & g_{n-2}\\
             \end{smallmatrix}
           \right)\ \ \ \ } &  X_n\oplus Y_{n-1}\ar[r]^{\left(
             \begin{smallmatrix}
               \varphi_n & g_{n-1}\\
               0& 0\\
             \end{smallmatrix}
           \right)} & Y_n\oplus\Sigma X_1\ar[r]^{\left(
             \begin{smallmatrix}
               \Sigma f_1\cdot g_n & 0 \\
               0 & 1\\
             \end{smallmatrix}
           \right)} & \Sigma X_2\oplus\Sigma X_1\\
 }$$ whose second row is the direct sum of $Z_\bullet$ and $X'_\bullet$. The second assertion is clear.
\end{proof}

\begin{lem} (\cite[Lemma 2.4, Lemma 2.5]{[GKO]}) \label{lem0}
Let $(\mathcal{C},\Sigma, \Theta)$ be a pre-$n$-angulated category, then the following hold.

(a) All $n$-angles are exact.

(b) Let $\varphi_\bullet: X_\bullet\rightarrow Y_\bullet$ be a weak isomorphism of exact $n$-$\Sigma$-sequences. Then $X_\bullet$ is an $n$-angle if and only if $Y_\bullet$ is an $n$-angle.
\end{lem}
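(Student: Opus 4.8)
The plan is to prove the two parts separately, using only the pre-$n$-angulated axioms (N1)--(N3) together with the fact that $\Sigma$ is an automorphism. For part (a), I would first reduce to a single spot: by (N2) the left rotation of an $n$-angle is again an $n$-angle, and the induced $\mbox{Hom}$-sequence of the rotation is just the shift of the original one, so it suffices to prove exactness of $\mbox{Hom}_{\mathcal C}(-,X_1)\xrightarrow{(f_1)_*}\mbox{Hom}_{\mathcal C}(-,X_2)\xrightarrow{(f_2)_*}\mbox{Hom}_{\mathcal C}(-,X_3)$ for an arbitrary $n$-angle $X_\bullet$; every other spot is then obtained by applying this to a rotation. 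The inclusion $\operatorname{im}\subseteq\operatorname{ker}$ amounts to $f_2f_1=0$, which I would get by completing, via (N3), the evident map from the trivial angle $X_1\xrightarrow{1}X_1\to 0\to\cdots\to 0\to\Sigma X_1$ (which lies in $\Theta$ by (N1)(b)) into $X_\bullet$ with first two components $1_{X_1}$ and $f_1$: the completed third component maps $0\to X_3$, and commutativity of the square $X_1\xrightarrow{0}0$ against $X_2\xrightarrow{f_2}X_3$ forces $f_2f_1=0$.

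The substantive direction is $\operatorname{ker}\subseteq\operatorname{im}$, and the idea is to rotate so that the hypothesis becomes the commutativity of a \emph{first} square. Given $a\colon T\to X_2$ with $f_2a=0$, I would form the left rotation $X_2\xrightarrow{f_2}X_3\to\cdots\to\Sigma X_1\xrightarrow{(-1)^n\Sigma f_1}\Sigma X_2$ of $X_\bullet$ and the left rotation $T\xrightarrow{0}0\to\cdots\to 0\to\Sigma T\xrightarrow{(-1)^n\Sigma 1_T}\Sigma T$ of the trivial angle on $T$, both of which are $n$-angles by (N1)(b) and (N2). Taking $a$ as the first component and $0$ as the second, the first square commutes precisely because $f_2a=0$, so (N3) completes this data to a morphism of $n$-angles $\varphi_\bullet$; reading off the last square yields $(-1)^n\Sigma a=(-1)^n\Sigma f_1\circ\varphi_n$, that is, $\Sigma a=\Sigma f_1\circ\varphi_n$. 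Since $\Sigma$ is an automorphism, $\varphi_n=\Sigma b$ for a unique $b\colon T\to X_1$, whence $a=f_1b$, as required. The main obstacle here is precisely the organization of the rotations and the signs $(-1)^n$, so that the hypothesis $f_2a=0$ really lands in the first square governed by (N3) and the factorization can be extracted from the last square via the automorphism $\Sigma$.

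For part (b), I would argue that a weak isomorphism between exact $n$-$\Sigma$-sequences is automatically an honest isomorphism, after which the equivalence follows from (N1)(a). Here $\varphi_\bullet$ being a weak isomorphism means that $\varphi_i$ is an isomorphism for every index except possibly one, say $\varphi_k$. Applying $\mbox{Hom}_{\mathcal C}(T,-)$ to $X_\bullet$ and $Y_\bullet$ and using that both sequences are exact, I obtain a morphism between two doubly infinite, $n$-periodic long exact sequences of abelian groups which is an isomorphism at every spot except those of the form $\mbox{Hom}_{\mathcal C}(T,\Sigma^j X_k)$. Because $n\geq 3$, consecutive exceptional spots are $n$ apart, so the two neighbours on each side of any such spot are isomorphisms; the five lemma then forces $\mbox{Hom}_{\mathcal C}(T,\varphi_k)$ to be an isomorphism as well, for every $T$. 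By the Yoneda lemma $\varphi_k$ is an isomorphism in $\mathcal C$, hence $\varphi_\bullet$ is an isomorphism of $n$-$\Sigma$-sequences and $X_\bullet\cong Y_\bullet$. Since $\Theta$ is closed under isomorphisms by (N1)(a), $X_\bullet$ belongs to $\Theta$ if and only if $Y_\bullet$ does. The only delicate point in this part is the bookkeeping of the five-lemma application along the periodic $\mbox{Hom}$-sequence, which is routine once the spacing of the exceptional spots is noted.
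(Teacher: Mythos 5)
Your part (a) is correct and is essentially the original argument of \cite{[GKO]} (note that the paper under review offers no proof of this lemma at all --- it simply cites \cite[Lemma 2.4, Lemma 2.5]{[GKO]} --- so the only baseline is the source): the reduction to one spot via (N2), the use of the trivial $n$-angle together with (N3) to get $f_2f_1=0$, and the map from the rotated trivial $n$-angle on $T$ into the rotated $X_\bullet$, with the factorization $a=f_1b$ read off from the last square using that $\Sigma$ is an automorphism, are all sound, and your bookkeeping of the sign $(-1)^n$ is right.

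Part (b), however, rests on a misreading of the definition, and this is a genuine gap. In \cite{[GKO]} and \cite{[BT1]}, whose conventions this paper follows (the paper never redefines the term), a \emph{weak isomorphism} is a morphism $\varphi_\bullet$ such that $\varphi_i$ and $\varphi_{i+1}$ are isomorphisms for a \emph{single} $i\in\{1,\dots,n\}$ (with $\varphi_{n+1}=\Sigma\varphi_1$); that is, only two consecutive components are required to be invertible, not all but one. For $n=3$ the two readings coincide and your five-lemma argument works, but for $n\geq 4$ the true hypothesis leaves $n-2\geq 2$ consecutive unknown spots in the long exact $\mathrm{Hom}$-sequences, which the five lemma cannot bridge; worse, your intermediate claim that a weak isomorphism of exact $n$-$\Sigma$-sequences is automatically an isomorphism is false. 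For instance, with $n\geq 4$ let $X_\bullet$ be the trivial $n$-angle on an object $A$, let $C_\bullet$ be a rotation of the trivial $n$-angle on some $E\neq 0$ concentrated as $E\xrightarrow{\pm 1}E$ in positions $3,4$ with zeros elsewhere, and let $\varphi_\bullet\colon X_\bullet\to Y_\bullet=X_\bullet\oplus C_\bullet$ be the inclusion: then $\varphi_1,\varphi_2$ are identities and both rows are exact $n$-angles, yet $\varphi_3\colon 0\to E$ is not invertible. So the route ``weak isomorphism $\Rightarrow$ isomorphism, then (N1)(a)'' cannot prove (b) as stated. Indeed the paper itself applies Lemma \ref{lem0}(b), in the proof of Lemma \ref{2}, to a morphism whose middle components are $\left(\begin{smallmatrix}1&0\\0&\psi_i\end{smallmatrix}\right)$ with $\psi_i$ not invertible, only the components at the two ends being isomorphisms --- exactly the case your version does not cover, so what you proved would be too weak to support the paper. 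The genuine proof (Lemma 2.5 of \cite{[GKO]}) must instead, after rotating so that $\varphi_1,\varphi_2$ are isomorphisms, compare $Y_\bullet$ with an $n$-angle extending the same first morphism, produced by (N1)(c) and mapped in via (N3), and then use the exactness of both sequences to transfer membership in $\Theta$; that comparison argument is the substance your proposal skips.
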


\begin{lem}\label{2}
Let $$\xymatrix{
X_\bullet\ar[d]^{\varphi_\bullet} & X_1 \ar[r]^{f_1}\ar@{=}[d] & X_2 \ar[r]^{f_2}\ar[d]^{\varphi_2} & X_3 \ar[r]^{f_3}\ar[d]^{\varphi_3} & \cdots \ar[r]^{f_{n-1}} & X_n \ar[r]^{f_n} \ar[d]^{\varphi_{n}} & \Sigma X_1 \ar@{=}[d]\\
Y_\bullet\ar[d]^{\psi_\bullet} & X_1 \ar[r]^{g_1}\ar@{=}[d] & Y_2 \ar[r]^{g_2}\ar@{=}[d] & Y_3 \ar[r]^{g_3}\ar[d]^{\psi_3} & \cdots \ar[r]^{g_{n-1}} & Y_n \ar[r]^{g_n}\ar[d]^{\psi_n} & \Sigma X_1\ar@{=}[d]\\
Z_\bullet & X_1 \ar[r]^{g_1} & Y_2 \ar[r]^{h_2} & Z_3 \ar[r]^{h_3} & \cdots \ar[r]^{h_{n-1}} & Z_n \ar[r]^{h_n}& \Sigma X_1
}$$ be a commutative diagram whose rows are $n$-angles.
Then $$\xymatrix{
 X_2 \ar[r]^{f_2}\ar[d]^{\varphi_2} & X_3 \ar[r]^{f_3}\ar[d]^{\varphi_3} & \cdots \ar[r]^{f_{n-1}}& X_{n} \ar[d]^{\varphi_{n}} \\
 Y_2 \ar[r]^{g_2} & Y_3 \ar[r]^{g_3} & \cdots\ar[r]^{g_{n-1}}&  Y_{n} \\
}$$ is a homotopy cartesian diagram with the differential $\Sigma f_1\cdot g_n$ if and only if $$\xymatrix{
 X_2 \ar[r]^{f_2}\ar[d]^{\varphi_2} & X_3 \ar[r]^{f_3}\ar[d]^{\psi_3\varphi_3} & \cdots \ar[r]^{f_{n-1}}& X_{n} \ar[d]^{\psi_n\varphi_{n}} \\
 Y_2 \ar[r]^{h_2} & Z_3 \ar[r]^{h_3} & \cdots\ar[r]^{h_{n-1}}&  Z_{n} \\
}$$
is a homotopy cartesian diagram with the differential $\Sigma f_1\cdot h_n$.
\end{lem}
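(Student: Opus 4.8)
The plan is to translate both homotopy cartesian conditions into statements about mapping cones via Lemma~\ref{1}, and then to compare those cones. Write $\chi_\bullet=\psi_\bullet\varphi_\bullet\colon X_\bullet\to Z_\bullet$ for the composite morphism of $n$-angles, so that $\chi_1=\mathrm{id}$, $\chi_2=\varphi_2$ (as $\psi_2=\mathrm{id}$) and $\chi_i=\psi_i\varphi_i$ for $i\geq 3$. Reading off the definition, the sequence $(2.2)$ attached to the upper diagram is precisely the sequence $Z_\bullet$ produced by Lemma~\ref{1} for $\varphi_\bullet$, with differential $\Sigma f_1\cdot g_n$, and the sequence $(2.2)$ attached to the lower diagram is the sequence $Z_\bullet$ of Lemma~\ref{1} for $\chi_\bullet$, with differential $\Sigma f_1\cdot h_n$. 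By Lemma~\ref{1} these are direct summands of $C(\varphi_\bullet)$ and $C(\chi_\bullet)$ respectively, the complement being the contractible $X'_\bullet$, which lies in $\Theta$ by (N1)(b) and (N2). Using closure of $\Theta$ under direct sums and summands (N1)(a), the upper diagram is homotopy cartesian iff $C(\varphi_\bullet)\in\Theta$, and the lower one iff $C(\chi_\bullet)\in\Theta$; so the statement reduces to the equivalence $C(\varphi_\bullet)\in\Theta \Longleftrightarrow C(\chi_\bullet)\in\Theta$.

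To compare the two cones I would establish an additivity isomorphism. Let $W_\bullet$ be the sequence $(2.2)$ attached to the homotopy cartesian diagram of $\psi_\bullet$ itself, i.e.\ the square with vertical maps $\mathrm{id}_{Y_2},\psi_3,\dots,\psi_n$. A triangular change of basis, entirely in the spirit of the computation proving Lemma~\ref{1}, should yield an isomorphism of $n$-$\Sigma$-sequences $Z^{\mathrm{top}}_\bullet\oplus W_\bullet\cong Z^{\mathrm{bot}}_\bullet\oplus T_\bullet$, where $Z^{\mathrm{top}}_\bullet,Z^{\mathrm{bot}}_\bullet$ denote the two homotopy cartesian sequences above and $T_\bullet=\bigoplus_{j=2}^{n}\tau^{(j)}_\bullet$ is a direct sum of (rotated) trivial $n$-angles, with $\tau^{(j)}_\bullet$ the trivial angle on $Y_j$ placed in positions $j-1,j$; a term-by-term count of objects confirms that such a $T_\bullet$ exists. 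Granting this isomorphism and granting $W_\bullet\in\Theta$, the equivalence is immediate from (N1)(a): if $Z^{\mathrm{top}}_\bullet\in\Theta$ then $Z^{\mathrm{top}}_\bullet\oplus W_\bullet\in\Theta$, hence $Z^{\mathrm{bot}}_\bullet\oplus T_\bullet\in\Theta$, hence $Z^{\mathrm{bot}}_\bullet\in\Theta$ as a summand, and the reverse implication is symmetric.

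The crux is to show $W_\bullet\in\Theta$, that is, that a homotopy cartesian diagram whose left-hand vertical map is the identity is always valid. By Lemma~\ref{1} applied to $\psi_\bullet$, the sequence $W_\bullet$ is a summand of $C(\psi_\bullet)$; since $Y_\bullet,Z_\bullet$ are $n$-angles, hence exact by Lemma~\ref{lem0}(a), and the mapping cone of a morphism of exact $n$-$\Sigma$-sequences is exact, $W_\bullet$ is exact. Because $\psi_2=\mathrm{id}$, the first morphism $\left(\begin{smallmatrix} -g_2 \\ 1 \end{smallmatrix}\right)$ of $W_\bullet$ is a split monomorphism, so a trivial summand $Y_2\xrightarrow{\mathrm{id}}Y_2$ splits off, the obstructing off-diagonal entry vanishing because consecutive differentials compose to zero; this reduces $W_\bullet$ to an exact $n$-$\Sigma$-sequence $V_\bullet$ whose first object is $0$. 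I would then prove the general fact that every exact $n$-$\Sigma$-sequence with a zero object is a direct sum of rotations of trivial $n$-angles: its last nonzero morphism is a split epimorphism by exactness, and, again because successive differentials compose to zero, a trivial $n$-angle splits off cleanly; iterating exhausts the sequence. Thus $V_\bullet$, hence $W_\bullet$, hence $C(\psi_\bullet)$, lies in $\Theta$.

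The hard part is exactly this last point: $W_\bullet\in\Theta$ is a special instance of the mapping cone axiom (N4) applied to $\psi_\bullet$, which we are not permitted to assume, and which must instead be forced out of the identity structure $\psi_1=\psi_2=\mathrm{id}$ together with (N1)--(N3) and Lemma~\ref{lem0}; the peeling argument above is where this happens, and it is the one step that genuinely uses that the left column is an isomorphism. The additivity isomorphism of the second step is routine in principle but demands careful bookkeeping of the signs $(-1)^i$ occurring in the differentials of the three cones, and one should note the slight asymmetry of the final argument: once additivity is in place the implication ``lower $\Rightarrow$ upper'' uses only summand-closure, whereas ``upper $\Rightarrow$ lower'' genuinely needs $W_\bullet\in\Theta$.
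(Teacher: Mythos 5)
Your reduction of both homotopy cartesian conditions to the conditions $C(\varphi_\bullet)\in\Theta$ and $C(\psi_\bullet\varphi_\bullet)\in\Theta$ via Lemma~\ref{1} is correct, but both pillars of the rest of the argument have genuine gaps. Take first the crux you yourself isolate, $W_\bullet\in\Theta$. Only your \emph{first} peel is clean: the first map $\left(\begin{smallmatrix}-g_2\\ 1\end{smallmatrix}\right)$ of $W_\bullet$ has an identity coordinate landing in an explicitly given direct sum, so a triangular change of basis exhibits the complementary summand. After that you hold an exact sequence $V_\bullet=(0\to Y_3\to Y_4\oplus Z_3\to\cdots\to Y_n\oplus Z_{n-1}\to Z_n\to 0)$ whose last map $(\psi_n\ h_{n-1})$ is indeed a split epimorphism, but now splitting off a trivial angle requires the complement to \emph{exist as an object}, i.e.\ requires splitting the idempotent $1-s\cdot(\psi_n\ h_{n-1})$; a pre-$n$-angulated category is just an additive category with extra structure and is not assumed idempotent complete. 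Your ``general fact'' that every exact $n$-$\Sigma$-sequence with a zero object is a direct sum of rotated trivial angles is therefore unproved, and as a statement about Hom-exact sequences in an arbitrary additive category it is false for $n\geq 5$ (splice $0\to R\to R^3\to R^3\to R\to 0$ from a non-free stably free module, inside finitely generated free modules: all terms free, the sequence Hom-exact and even contractible, but a decomposition into trivial pieces would force the stably free module to be free). What exactness actually yields is a contracting homotopy, so $V_\bullet$ is contractible; to conclude $V_\bullet\in\Theta$ you would need the nontrivial Neeman-style lemma that contractible $n$-$\Sigma$-sequences are $n$-angles, which you neither state nor prove. Note also that for $n\geq 4$ the maps $\psi_3,\dots,\psi_n$ are genuinely not isomorphisms, so nothing cheap rescues this.

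The second pillar, the additivity isomorphism $Z^{\mathrm{top}}_\bullet\oplus W_\bullet\cong Z^{\mathrm{bot}}_\bullet\oplus T_\bullet$, is likewise only asserted, and matching objects termwise (which your count does achieve) is far from producing commuting squares. Already at the last square one needs an isomorphism $u\colon Y_n\oplus Z_n\to Z_n\oplus Y_n$ whose component $Y_n\to Z_n$ is forced to be $\psi_n$ (not invertible for $n\geq4$) and whose component $v\colon Z_n\to Z_n$ must satisfy $\Sigma f_2\cdot h_n\cdot v=0$ while $u$ stays invertible; no triangular matrix visibly does this, and the claim amounts to splitting the cone-of-a-composite relation, which does not hold for free. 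Contrast the paper's proof of Lemma~\ref{2}: it writes down the obvious termwise morphism $(\mathrm{id},\mathrm{id},\left(\begin{smallmatrix}1&0\\0&\psi_3\end{smallmatrix}\right),\dots,\left(\begin{smallmatrix}1&0\\0&\psi_{n-1}\end{smallmatrix}\right),\psi_n,\mathrm{id})$ from the upper candidate sequence to the lower one (commutativity is just $\psi_jg_{j-1}=h_{j-1}\psi_{j-1}$ and $h_n\psi_n=g_n$), notes both rows are exact by Lemma~\ref{1} because $C(\varphi_\bullet)$ and $C(\psi_\bullet\varphi_\bullet)$ are mapping cones of morphisms of exact sequences, and then applies Lemma~\ref{lem0}(b): the morphism is a weak isomorphism (identities in two consecutive positions), so one row lies in $\Theta$ iff the other does. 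Lemma~\ref{lem0}(b) is precisely the tool that replaces both of your hard steps; without invoking it, each of your two claims is an open sub-lemma of comparable difficulty to the lemma itself, so the proposal as it stands is not a proof.
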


\begin{proof} By Lemma \ref{lem0}(a), the $n$-angles $X_\bullet, Y_\bullet$ and $Z_\bullet$ are exact.
Since the class of exact $n$-$\Sigma$-sequences is closed under mapping cones, we obtain that the mapping cones $C(\varphi_\bullet)$ and $C(\psi_\bullet\varphi_\bullet)$ are also exact. It is easy to see that we have the following commutative diagram
$$\xymatrixcolsep{3.5pc}\xymatrixrowsep{2.5pc}\xymatrix{
X_2\ar[r]^{\left(
             \begin{smallmatrix}
               -f_2\\
                \varphi_2\\
             \end{smallmatrix}
           \right)}\ar@{=}[d] & X_3\oplus Y_2\ar[r]^{\left(
             \begin{smallmatrix}
               -f_3 & 0 \\
               \varphi_3 & g_2\\
             \end{smallmatrix}
           \right)}\ar@{=}[d] & X_4\oplus Y_3\ar[r]^{\left(
             \begin{smallmatrix}
               -f_4 & 0 \\
               \varphi_4 & g_3\\
             \end{smallmatrix}
           \right)}\ar[d]^{\left(
             \begin{smallmatrix}
               1 & 0 \\
               0 & \psi_3\\
             \end{smallmatrix}
           \right)} & \cdots\\
X_2\ar[r]^{\left(
             \begin{smallmatrix}
               -f_2\\
                \varphi_2\\
             \end{smallmatrix}
           \right)} & X_3\oplus Y_2\ar[r]^{\left(
             \begin{smallmatrix}
               -f_3 & 0 \\
               \psi_3\varphi_3 & h_2\\
             \end{smallmatrix}
           \right)} & X_4\oplus Z_3\ar[r]^{\left(
             \begin{smallmatrix}
               -f_4 & 0 \\
               \psi_4\varphi_4 & h_3\\
             \end{smallmatrix}
           \right)} & \cdots\\
}$$
$$\xymatrixcolsep{4pc}\xymatrixrowsep{2.5pc}\xymatrix{
 \cdots\ar[r]^{\left(
             \begin{smallmatrix}
               -f_{n-1} & 0 \\
               \varphi_{n-1} & g_{n-2}\\
             \end{smallmatrix}
           \right)\ \ \ \ } & X_n\oplus Y_{n-1}\ar[r]^{\ \ (\varphi_n,g_{n-1})}\ar[d]^{\left(
             \begin{smallmatrix}
               1 & 0 \\
               0 & \psi_{n-1}\\
             \end{smallmatrix}
           \right)} & Y_n\ar[r]^{\Sigma f_1\cdot g_n}\ar[d]^{\psi_n} & \Sigma X_2 \ar@{=}[d]\\
 \cdots\ar[r]^{\left(
             \begin{smallmatrix}
               -f_{n-1} & 0 \\
               \psi_{n-1}\varphi_{n-1} & h_{n-2}\\
             \end{smallmatrix}
           \right)\ \ \ \ \ \ } & X_n\oplus Z_{n-1}\ar[r]^{\ \ \ (\psi_n\varphi_n,h_{n-1})} & Z_n\ar[r]^{\Sigma f_1\cdot h_n} & \Sigma X_2\\
}$$
whose rows are exact $n$-$\Sigma$-sequences by Lemma \ref{1}. The result holds by definition and Lemma \ref{lem0}(b).
\end{proof}

\section{equivalent statements of higher mapping cone axiom}

In this section, we develop some equivalent statements of higher mapping cone axiom to explain the higher octahedral axiom. We leave the dual statements to the reader.

\begin{thm}\label{thm1}
Let $(\mathcal{C},\Sigma,\Theta)$ be a pre-$n$-angulated category. Then $\Theta$ satisfies (N4) if and only if $\Theta$ satisfies (N4-1):

Let  $$\xymatrix{
X_1 \ar[r]^{f_1}\ar@{=}[d] & X_2 \ar[r]^{f_2}\ar[d]^{\varphi_2} & X_3 \ar[r]^{f_3} & \cdots \ar[r]^{f_{n-1}}& X_n \ar[r]^{f_n} & \Sigma X_1 \ar@{=}[d]\\
X_1 \ar[r]^{g_1} & Y_2 \ar[r]^{g_2} & Y_3 \ar[r]^{g_3} & \cdots \ar[r]^{g_{n-1}} & Y_n \ar[r]^{g_n}& \Sigma X_1\\
}$$ be a commutative diagram whose rows are $n$-angles. Then there exist morphisms $\varphi_i:X_i\rightarrow Y_i$ for $3\leq i\leq n$ such that the following
$$\xymatrix{
 X_2 \ar[r]^{f_2}\ar[d]^{\varphi_2} & X_3 \ar[r]^{f_3}\ar[d]^{\varphi_3} & \cdots \ar[r]^{f_{n-1}}& X_n \ar[d]^{\varphi_n} \\
 Y_2 \ar[r]^{g_2} & Y_3 \ar[r]^{g_3} & \cdots \ar[r]^{g_{n-1}} & Y_n \\
}$$ is a homotopy cartesian diagram and $\Sigma f_1\cdot g_n$ is the differential.

\end{thm}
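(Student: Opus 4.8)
The plan rests on a single observation drawn from Lemma~\ref{1}. Suppose we are in the special situation where the first vertical map is an identity, i.e. $Y_1=X_1$, $\varphi_1=\mathrm{id}_{X_1}$ and $g_1=\varphi_2 f_1$. Then Lemma~\ref{1} exhibits an isomorphism of $n$-$\Sigma$-sequences $C(\varphi_\bullet)\cong Z_\bullet\oplus X_\bullet'$, in which $Z_\bullet$ is exactly the sequence attached by the definition of homotopy cartesian diagram to the square $(\varphi_2,\dots,\varphi_n)$ with differential $\Sigma f_1\cdot g_n$, and $X_\bullet'=(X_1\to 0\to\cdots\to 0\to\Sigma X_1\xrightarrow{1}\Sigma X_1)$ is, up to a sign isomorphism, a left rotation of the trivial $n$-angle on $X_1$, hence lies in $\Theta$ by (N1)(b), (N2) and (N1)(a). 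Since $\Theta$ is closed under direct sums and direct summands by (N1)(a), this yields the correspondence that drives everything: $C(\varphi_\bullet)\in\Theta$ if and only if $Z_\bullet\in\Theta$, if and only if the square is homotopy cartesian with differential $\Sigma f_1\cdot g_n$. I would record this first, as it converts the mapping-cone formulation of (N4) into the homotopy-cartesian formulation whenever the first map is an identity.

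The direction (N4)$\Rightarrow$(N4-1) is then immediate. The data of (N4-1) is precisely the situation of (N3) with $\varphi_1=\mathrm{id}_{X_1}$ and $\Sigma\varphi_1=\mathrm{id}_{\Sigma X_1}$, so (N4) supplies $\varphi_3,\dots,\varphi_n$ completing $\varphi_\bullet$ to a morphism of $n$-angles with $C(\varphi_\bullet)\in\Theta$. By the correspondence above this says exactly that the square $(\varphi_2,\dots,\varphi_n)$ is homotopy cartesian with differential $\Sigma f_1\cdot g_n$, which is (N4-1). This direction is purely formal once the correspondence is in hand.

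The real work is (N4-1)$\Rightarrow$(N4), because (N4-1) only treats an identity first map whereas (N4) allows an arbitrary $\varphi_1\colon X_1\to Y_1$; the plan is to factor $\varphi_\bullet$ so as to reduce to the identity case. Using (N1)(c), choose an $n$-angle $W_\bullet$ whose first morphism is $\varphi_2 f_1=g_1\varphi_1\colon X_1\to Y_2$. Then (N3) produces a morphism $\alpha_\bullet\colon X_\bullet\to W_\bullet$ with $\alpha_1=\mathrm{id}_{X_1}$, $\alpha_2=\varphi_2$, and a morphism $\beta_\bullet\colon W_\bullet\to Y_\bullet$ with $\beta_1=\varphi_1$, $\beta_2=\mathrm{id}_{Y_2}$; since $(\beta_\bullet\alpha_\bullet)_1=\varphi_1$ and $(\beta_\bullet\alpha_\bullet)_2=\varphi_2$, the composite $\beta_\bullet\alpha_\bullet$ is an admissible completion of $(\varphi_1,\varphi_2)$, so I would take $\varphi_i:=\beta_i\alpha_i$ for $3\le i\le n$. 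Because $\alpha_\bullet$ has identity first map, (N4-1) and the correspondence give $C(\alpha_\bullet)\in\Theta$; and because $\beta_\bullet$ has $\beta_2=\mathrm{id}$, a left rotation turns its first map into an identity, so (N4-1), (N2) and the compatibility of the mapping cone with rotation give $C(\beta_\bullet)\in\Theta$ as well.

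It remains to combine these into $C(\varphi_\bullet)=C(\beta_\bullet\alpha_\bullet)\in\Theta$, and I expect this composition step to be the main obstacle. It is genuinely octahedral in nature: the cones of $\alpha_\bullet$, $\beta_\bullet$ and their composite are linked by the sort of short exact sequence of mapping cones that underlies the octahedral axiom rather than by a direct-sum decomposition, so (N1)(a) does not close the gap; and one cannot simply apply (N4-1) to the natural comparison morphism $\left(\begin{smallmatrix}1&0\\0&\beta_i\end{smallmatrix}\right)\colon C(\alpha_\bullet)\to C(\varphi_\bullet)$, since its target is precisely the sequence whose membership in $\Theta$ is in question. The way I would push it through is to use the comparison morphism together with the two known $n$-angles $C(\alpha_\bullet)$ and $C(\beta_\bullet)$ to build a weak isomorphism onto $C(\varphi_\bullet)$ and then invoke Lemma~\ref{lem0}(b); all three sequences are exact by Lemma~\ref{lem0}(a) and the closure of exact sequences under mapping cones, so the only real content is the construction of that weak isomorphism. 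This octahedral bookkeeping relating the three cones is the delicate heart of the argument; every other step is formal, resting only on the correspondence of the first paragraph, on the closure axioms (N1), and on the exactness results of Lemma~\ref{lem0}.
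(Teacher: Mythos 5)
Your first two paragraphs are correct and match the paper exactly: the equivalence ``$C(\varphi_\bullet)\in\Theta$ iff the square $(\varphi_2,\dots,\varphi_n)$ is homotopy cartesian with differential $\Sigma f_1\cdot g_n$'' (when $\varphi_1=\mathrm{id}$) is Lemma~\ref{1} combined with (N1)(a), (N1)(b), (N2), and this gives (N4)$\Rightarrow$(N4-1) just as in the paper. The genuine gap is in your converse, and it sits exactly where you flag it. The step ``$C(\alpha_\bullet),C(\beta_\bullet)\in\Theta\Rightarrow C(\beta_\bullet\alpha_\bullet)\in\Theta$'' is not bookkeeping: already for $n=3$ this composition property is a known equivalent of the octahedral axiom, so deferring it to a to-be-constructed weak isomorphism assumes essentially what is to be proved. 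Moreover the proposed mechanism cannot succeed: the three cones are related by an extension, not a weak isomorphism --- in the triangulated case there is a triangle $C(\alpha)\to C(\beta\alpha)\to C(\beta)\to\Sigma C(\alpha)$, and unless it splits, no morphism of $n$-$\Sigma$-sequences between $C(\beta\alpha)$ and anything assembled from $C(\alpha)$ and $C(\beta)$ has the consecutive invertible components that Lemma~\ref{lem0}(b) requires. The only composition tool in the paper, Lemma~\ref{2}, demands that the second morphism be the identity in positions $1$ and $2$; your $\beta_\bullet$ has $\beta_1=\varphi_1$ arbitrary, so neither that lemma nor its dual reaches your situation (this is precisely why the proof of Theorem~\ref{thm2}(c)$\Rightarrow$(a) goes to the trouble of building a four-row diagram in which \emph{every} vertical morphism fixes $X_1$).

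The paper's proof of (N4-1)$\Rightarrow$(N4) avoids composing cones altogether, by padding with trivial summands rather than factoring. Given arbitrary $\varphi_1,\varphi_2$, one first checks via an explicit isomorphism onto a direct sum of $n$-angles that the sequence $X_1\oplus Y_1\xrightarrow{(\varphi_2f_1\ g_1)}Y_2\xrightarrow{g_2}\cdots\xrightarrow{g_{n-2}}Y_{n-1}\to\Sigma X_1\oplus Y_n\to\Sigma X_1\oplus\Sigma Y_1$, with last two maps $\left(\begin{smallmatrix}0\\ g_{n-1}\end{smallmatrix}\right)$ and $\left(\begin{smallmatrix}-1&0\\ \Sigma\varphi_1&g_n\end{smallmatrix}\right)$, is an $n$-angle. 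Comparing it with the $n$-angle $X_1\oplus Y_1\xrightarrow{\left(\begin{smallmatrix}f_1&0\\0&1\end{smallmatrix}\right)}X_2\oplus Y_1\xrightarrow{(f_2\ 0)}X_3\to\cdots$ (the direct sum of $X_\bullet$ and the trivial $n$-angle on $Y_1$), the first vertical map becomes $1_{X_1\oplus Y_1}$, so a \emph{single} application of (N4-1) applies, and the resulting homotopy-cartesian $n$-angle is literally the mapping cone demanded by (N4), the stray entry $\varphi_n'$ being forced to equal $-f_n$ by commutativity of the last square. If you replace your factor-through-$W_\bullet$-and-compose step by this padding argument, the rest of your write-up goes through; as written, the ``delicate heart'' you defer is the whole theorem.
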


\begin{proof}
Assume that $\Theta$ satisfies (N4), then there exist morphisms $\varphi_i: X_i\rightarrow Y_i$ for $3\leq i\leq n$ such that the mapping cone $C(\varphi_\bullet)$ is an $n$-angle. Since the class of $n$-angles is closed under direct summands, the remaining part of (N4-1) follows from Lemma \ref{1}.

Conversely, we assume that $\Theta$ satisfies (N4-1). Given a commutative diagram
$$\xymatrix{
X_1 \ar[r]^{f_1}\ar[d]^{\varphi_1} & X_2 \ar[r]^{f_2}\ar[d]^{\varphi_2} & X_3 \ar[r]^{f_3} & \cdots \ar[r]^{f_{n-1}}& X_n \ar[r]^{f_n} & \Sigma X_1 \ar[d]^{\Sigma \varphi_1}\\
Y_1 \ar[r]^{g_1} & Y_2 \ar[r]^{g_2} & Y_3 \ar[r]^{g_3} & \cdots \ar[r]^{g_{n-1}} & Y_n \ar[r]^{g_n}& \Sigma Y_1\\
}$$  whose rows are $n$-angles.  The following commutative diagram
$$\xymatrixcolsep{3.5pc}\xymatrix{
 X_1\oplus Y_1 \ar[r]^{\left(
                    \begin{smallmatrix}
                      \varphi_2f_1& g_1 \\
                    \end{smallmatrix}
                  \right)} \ar[d]^{\left(
                    \begin{smallmatrix}
                      1 & 0 \\
                      \varphi_1 & 1\\
                    \end{smallmatrix}
                  \right)} & Y_2\ar[r]^{g_2}\ar@{=}[d] & Y_3\ar[r]^{g_3}\ar@{=}[d] &\cdots\\
        X_1\oplus Y_1 \ar[r]^{\left(
                    \begin{smallmatrix}
                      0& g_1 \\
                    \end{smallmatrix}
                  \right)}  & Y_2\ar[r]^{g_2} & Y_3\ar[r]^{g_3} & \cdots\\
}$$
$$\xymatrixcolsep{4pc}\xymatrix{
\cdots\ar[r]^{g_{n-2}} & Y_{n-1} \ar[r]^{\left(
                    \begin{smallmatrix}
                      0 \\
                      g_{n-1} \\
                    \end{smallmatrix}
                  \right)}\ar@{=}[d]& \Sigma X_1\oplus Y_n \ar[r]^{\tiny\left(
                    \begin{smallmatrix}
                     -1 & 0 \\
                     \Sigma \varphi_1 & g_n \\
                    \end{smallmatrix}
                  \right)} \ar@{=}[d]& \Sigma X_1\oplus\Sigma Y_1 \ar[d]^{\left(
                    \begin{smallmatrix}
                      1& 0 \\
                      \Sigma \varphi_1 & 1 \\
                    \end{smallmatrix}
                  \right)}\\
\cdots\ar[r]^{g_{n-2}} & Y_{n-1} \ar[r]^{\left(
                    \begin{smallmatrix}
                      0 \\
                      g_{n-1} \\
                    \end{smallmatrix}
                  \right)}& \Sigma X_1\oplus Y_n \ar[r]^{\left(
                    \begin{smallmatrix}
                     -1 & 0 \\
                     0 & g_n \\
                    \end{smallmatrix}
                  \right)}& \Sigma X_1\oplus\Sigma Y_1\\
}$$
shows that the first row  is an $n$-angle since the second row is a direct sum of two $n$-angles. By (N4-1), the following diagram
$$\xymatrixcolsep{3.5pc}\xymatrix{
X_1\oplus Y_1\ar[r]^{\left(
                    \begin{smallmatrix}
                      f_1 & 0 \\
                      0 & 1 \\
                    \end{smallmatrix}
                  \right)}\ar@{=}[d] &  X_2\oplus Y_1 \ar[r]^{\left(
                    \begin{smallmatrix}
                      f_2 & 0 \\
                    \end{smallmatrix}
                  \right)}\ar[d]^{\left(
                    \begin{smallmatrix}
                       \varphi_2 & g_1 \\
                    \end{smallmatrix}
                  \right)}  & X_3 \ar[r]^{f_3}\ar@{-->}[d]^{\varphi_3}   & \cdots\\
        X_1\oplus Y_1 \ar[r]^{\left(
                    \begin{smallmatrix}
                      \varphi_2f_1& g_1 \\
                    \end{smallmatrix}
                  \right)}  & Y_2\ar[r]^{g_2} & Y_3\ar[r]^{g_3} &\cdots\\
}$$
$$\xymatrixcolsep{4pc}\xymatrix{
\cdots \ar[r]^{f_{n-2}} & X_{n-1} \ar[r]^{f_{n-1}}\ar@{-->}[d]^{\varphi_{n-1}} & X_n \ar[r]^{\left(
                    \begin{smallmatrix}
                      f_n \\
                      0 \\
                    \end{smallmatrix}
                  \right)} \ar@{-->}[d]^{\left(
                                     \begin{smallmatrix}
                                       \varphi_n' \\
                                       \varphi_n \\
                                     \end{smallmatrix}
                                   \right)}  & \Sigma X_1\oplus\Sigma Y_1 \ar@{=}[d] \\
\cdots\ar[r]^{g_{n-2}} & Y_{n-1} \ar[r]^{\left(
                    \begin{smallmatrix}
                      0 \\
                      g_{n-1} \\
                    \end{smallmatrix}
                  \right)} & \Sigma X_1\oplus Y_n \ar[r]^{\tiny\left(
                    \begin{smallmatrix}
                     -1 & 0 \\
                     \Sigma \varphi_1 & g_n \\
                    \end{smallmatrix}
                  \right)} & \Sigma X_1\oplus\Sigma Y_1 \\
}$$ can be completed to a morphism of $n$-angles
 such that the following sequence
$$\xymatrixcolsep{4pc}\xymatrix{
 X_2\oplus Y_1\ar[r]^{\left(
                            \begin{smallmatrix}
                                -f_2 & 0 \\
                                \varphi_2 & g_1\\
                             \end{smallmatrix}
                           \right)} & X_3\oplus Y_2 \ar[r]^{\left(
                            \begin{smallmatrix}
                               -f_3  & 0 \\
                                \varphi_3  & g_2 \\
                             \end{smallmatrix}
                           \right)} & X_4\oplus Y_3 \ar[r]^{\left(
                            \begin{smallmatrix}
                               -f_4 & 0 \\
                                \varphi_4 & g_{3} \\
                             \end{smallmatrix}
                           \right)} & \cdots \\
}$$
$$\xymatrixcolsep{4.5pc}\xymatrix{
\cdots\ar[r]^{\left(
                            \begin{smallmatrix}
                               -f_{n-1} & 0 \\
                                \varphi_{n-1} & g_{n-2} \\
                             \end{smallmatrix}
                           \right)\ \ \ \ } & X_n\oplus Y_{n-1}\ar[r]^{\left(
                            \begin{smallmatrix}
                               \varphi'_n & 0 \\
                                \varphi_n & g_{n-1} \\
                             \end{smallmatrix}
                           \right)} & \Sigma X_1\oplus Y_n \ar[r]^{\left(
                            \begin{smallmatrix}
                               -\Sigma f_1 & 0 \\
                                \Sigma\varphi_1 & g_{n} \\
                             \end{smallmatrix}
                           \right)} & \Sigma X_2\oplus\Sigma Y_1\\
}$$ is an $n$-angle, where $\varphi_n'=-f_n$ by the commutativity of the above rightmost square.
\end{proof}

\begin{thm}\label{thm2}
Let $(\mathcal{C},\Sigma,\Theta)$ be a pre-$n$-angulated category. Then the following statements are equivalent.

(a) $\Theta$ satisfies (N4-1).

(b) $\Theta$ satisfies (N4-2):

Given an $n$-angle $X_1\xrightarrow{f_1} X_2\xrightarrow{f_2} \cdots \xrightarrow{f_{n-1}} X_n\xrightarrow{f_n}\Sigma X_1$  and a morphism $\varphi_1:X_1\rightarrow Y_1$, there exists a commutative diagram
$$\xymatrix{
X_1 \ar[r]^{f_1}\ar[d]^{\varphi_1} & X_2 \ar[r]^{f_2}\ar[d]^{\varphi_2} & \cdots \ar[r]^{f_{n-2}}& X_{n-1} \ar[r]^{f_{n-1}}\ar[d]^{\varphi_{n-1}}& X_n \ar[r]^{f_n}\ar@{=}[d] & \Sigma X_1\ar[d]^{\Sigma\varphi_1}\\
Y_1 \ar[r]^{g_1} & Y_2 \ar[r]^{g_2} & \cdots \ar[r]^{g_{n-2}} &Y_{n-1} \ar[r]^{g_{n-1}} & X_n \ar[r]^{g_n}& \Sigma Y_1\\
}$$
such that the second row is an $n$-angle and $$\xymatrix{
 X_1 \ar[r]^{f_1}\ar[d]^{\varphi_1} & X_2 \ar[r]^{f_2}\ar[d]^{\varphi_2} & \cdots \ar[r]^{f_{n-2}}& X_{n-1} \ar[d]^{\varphi_{n-1}} \\
 Y_1 \ar[r]^{g_1} & Y_2 \ar[r]^{g_2} & \cdots \ar[r]^{g_{n-2}} & Y_{n-1} \\
}$$ is a homotopy cartesian diagram where $(-1)^nf_n\cdot g_{n-1}$ is the differential.

(c) $\Theta$ satisfies (N4-3):

 Given two morphisms $f_1 :X_1\rightarrow X_2$ and $\varphi_2:X_2\rightarrow Y_2$, there exists a commutative diagram
$$\xymatrix{
X_1 \ar[r]^{f_1}\ar@{=}[d] & X_2 \ar[r]^{f_2}\ar[d]^{\varphi_2} & X_3 \ar[r]^{f_3}\ar[d]^{\varphi_3} & \cdots \ar[r]^{f_{n-1}}& X_n \ar[r]^{f_n}\ar[d]^{\varphi_n} & \Sigma X_1 \ar@{=}[d]\\
X_1 \ar[r]^{g_1} & Y_2 \ar[r]^{g_2} & Y_3 \ar[r]^{g_3} & \cdots \ar[r]^{g_{n-1}} & Y_n \ar[r]^{g_n}& \Sigma X_1\\
}$$
such that each row is an $n$-angle and $$\xymatrix{
 X_2 \ar[r]^{f_2}\ar[d]^{\varphi_2} & X_3 \ar[r]^{f_3}\ar[d]^{\varphi_3} & \cdots \ar[r]^{f_{n-1}}& X_n \ar[d]^{\varphi_n} \\
 Y_2 \ar[r]^{g_2} & Y_3 \ar[r]^{g_3} & \cdots \ar[r]^{g_{n-1}} & Y_n \\
}$$ is a homotopy cartesian diagram where $\Sigma f_1\cdot g_n$ is the differential.

\end{thm}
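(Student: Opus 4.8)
The plan is to establish the four implications (a)$\Rightarrow$(c), (c)$\Rightarrow$(a), (a)$\Rightarrow$(b) and (b)$\Rightarrow$(c); the first two give (a)$\Leftrightarrow$(c), while (a)$\Rightarrow$(b) together with (b)$\Rightarrow$(c)$\Rightarrow$(a) gives (a)$\Leftrightarrow$(b). The guiding observation is that (N4-1) and (N4-3) share one shape -- a homotopy cartesian diagram in positions $2,\dots,n$ with the identity in the leftmost column and differential $\Sigma f_1\cdot g_n$ -- and differ only in whether the two rows are given or must be constructed, whereas (N4-2) is the one-step rotation of this shape, with the identity over $X_n$ and differential $(-1)^nf_n\cdot g_{n-1}$. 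Thus (a)$\Leftrightarrow$(c) is a ``given versus constructed rows'' statement needing no rotation, while the passage to (b) is governed by a single application of (N2).

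For (a)$\Rightarrow$(c) I would, given $f_1$ and $\varphi_2$, complete $f_1$ to an $n$-angle and $g_1:=\varphi_2f_1$ to a second $n$-angle by (N1)(c), and then apply (N4-1) to the commutative square with columns $\mathrm{id}_{X_1}$ and $\varphi_2$. For (c)$\Rightarrow$(a) I would apply (N4-3) to $(f_1,\varphi_2)$, producing a homotopy cartesian diagram over a constructed bottom $n$-angle $\widetilde Y_\bullet$ completing $g_1=\varphi_2f_1$ with second term $Y_2$; as the given $Y_\bullet$ is another such completion, (N3) yields a morphism $\widetilde Y_\bullet\to Y_\bullet$ that is the identity on the first two terms, and Lemma \ref{2} transfers the homotopy cartesian property to $Y_\bullet$ with the differential still equal to $\Sigma f_1\cdot g_n$.

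For the rotation steps, in (a)$\Rightarrow$(b) I would right-rotate the given top $n$-angle $X_\bullet$ to $X_\bullet^{[-1]}$, whose first morphism is $a=(-1)^n\Sigma^{-1}f_n$, complete $\varphi_1\cdot a$ to a bottom $n$-angle by (N1)(c), and apply (N4-1) to the resulting square. Since (N4-1) leaves the top row untouched, left-rotating the completed morphism of $n$-angles restores $X_\bullet$ on top and produces the bottom row demanded by (N4-2). The implication (b)$\Rightarrow$(c) runs the same machine in reverse: complete $f_1$ to $X_\bullet$, left-rotate to $X_\bullet^{[1]}$, feed $(X_\bullet^{[1]},\varphi_2)$ into (N4-2), and right-rotate the output; here (N4-2) already delivers a full commutative diagram, and (N4-3) is purely existential, so no comparison is needed.

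The step I expect to be delicate is the sign and rotation bookkeeping in these last two implications. I must verify that the differential $\Sigma a\cdot(-)$ produced by (N4-1) becomes exactly $(-1)^nf_n\cdot g_{n-1}$ after the left rotation: the factor $(-1)^n$ in $a=(-1)^n\Sigma^{-1}f_n$, hence in $\Sigma a=(-1)^nf_n$, cancels the $(-1)^n$ introduced by rotating the last morphism, so the two prescribed differentials indeed correspond. I also need the \emph{entire} diagram of (N4-2), not only its homotopy cartesian part, to commute; for this I would use that (a) is equivalent to (N4) by Theorem \ref{thm1}, so that (N4-1) actually furnishes a morphism of $n$-angles, and that the rotation of a morphism of $n$-angles is again such a morphism, which supplies the two remaining boundary squares -- one of which reduces to the identity $\Sigma\gamma_1=(-1)^n\Sigma\varphi_1\cdot f_n$ with $\gamma_1=\varphi_1a$. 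This sign-tracking, precisely what Remark \ref{2.1} was designed to suppress, is the only genuine obstacle; the remaining verifications are routine diagram checks.
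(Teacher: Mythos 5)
Your implications (a)$\Rightarrow$(b) and (b)$\Rightarrow$(c) coincide with the paper's proof (the paper proves the cycle (a)$\Rightarrow$(b)$\Rightarrow$(c)$\Rightarrow$(a)): rotate by (N2), complete the rotated first morphism by (N1)(c), apply (N4-1) resp.\ (N4-2), and your sign bookkeeping is correct --- with $a=(-1)^n\Sigma^{-1}f_n$ the differential furnished by (N4-1) is $\Sigma a\cdot g_{n-1}=(-1)^nf_n\cdot g_{n-1}$, and the $(-1)^n$ from rotating the bottom row cancels the one in $\Sigma\gamma_1=(-1)^n\Sigma\varphi_1\cdot f_n$ to give $g_n=\Sigma\varphi_1\cdot f_n$. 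Your direct (a)$\Rightarrow$(c) (complete $f_1$ and $\varphi_2f_1$ to $n$-angles by (N1)(c), then apply (N4-1)) is a harmless shortcut where the paper instead routes through (b), and your invocation of Theorem \ref{thm1} to upgrade (N4-1) to an honest morphism of $n$-angles, so that the two boundary squares of the (N4-2)-diagram commute, is legitimate and in fact more explicit than the paper's bare ``Now (b) follows from (N4-1)''.

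The genuine gap is in (c)$\Rightarrow$(a). Axiom (N4-3) is existential in \emph{both} rows: applied to $(f_1,\varphi_2)$ it returns some top $n$-angle $X_1\xrightarrow{f_1}X_2\xrightarrow{f'_2}X'_3\rightarrow\cdots\rightarrow X'_n\rightarrow\Sigma X_1$ as well as a bottom one, and the homotopy cartesian diagram it provides has the primed objects and morphisms $f'_2,\dots,f'_{n-1}$ along its top edge, not the given $f_2,\dots,f_{n-1}$ of $X_\bullet$. Your proposal compares only the bottom rows (a morphism $\widetilde Y_\bullet\to Y_\bullet$ from (N3), then Lemma \ref{2}); this would at best yield a homotopy cartesian diagram from the \emph{primed} top row to $Y_\bullet$, which is not what (N4-1) demands. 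One also needs a comparison $\theta_\bullet\colon X_\bullet\to X'_\bullet$ from (N3) (identity on $X_1$, $X_2$ and $\Sigma X_1$), and then, crucially, the \emph{dual} of Lemma \ref{2}: as stated, Lemma \ref{2} only replaces the bottom row of a homotopy cartesian diagram by postcomposing with a suitable morphism of $n$-angles, whereas replacing the top row by precomposition along $\theta_\bullet$ is the dual statement. This is exactly the paper's four-row ladder $X_\bullet\to X'_\bullet\to Y'_\bullet\to Y_\bullet$ and its appeal to ``Lemma \ref{2} and its dual'', producing the vertical maps $\psi_i\varphi'_i\theta_i$ with differential $\Sigma f_1\cdot g_n$. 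The omission is repairable along these lines, but as written your (c)$\Rightarrow$(a) step fails.
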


\begin{proof}
(a) $\Rightarrow$ (b). By (N2) and (N1)(c), we have the following commutative diagram
$$\xymatrixcolsep{3.3pc}\xymatrix{
\Sigma^{-1}X_n\ar[r]^{(-1)^n\Sigma^{-1}f_n}\ar@{=}[d] & X_1 \ar[r]^{f_1}\ar[d]^{\varphi_1} & X_2 \ar[r]^{f_2}  & \cdots \ar[r]^{f_{n-2}} & X_{n-1}\ar[r]^{f_{n-1}} & X_n  \ar@{=}[d]\\
\Sigma^{-1}X_n\ar[r]^{(-1)^n\varphi_1\Sigma^{-1}f_n} & Y_1 \ar[r]^{g_1} & Y_2 \ar[r]^{g_2}  & \cdots \ar[r]^{g_{n-2}}& Y_{n-1} \ar[r]^{g_{n-1}} & X_n \\
}$$ whose rows are $n$-angles. Now (b) follows from (N4-1).

(b) $\Rightarrow$ (c). For the morphism $f_1: X_1\rightarrow X_2$, by (N1)(c) and (N2) we assume that $ X_2\xrightarrow{f_2} \cdots \xrightarrow{f_{n-1}} X_n\xrightarrow{f_n}\Sigma X_1\xrightarrow{(-1)^n\Sigma f_1} \Sigma X_2$ is an $n$-angle. Then (c) follows from (N4-2).

(c) $\Rightarrow$ (a).  Given a commutative diagram  $$\xymatrix{
X_1 \ar[r]^{f_1}\ar@{=}[d] & X_2 \ar[r]^{f_2}\ar[d]^{\varphi_2} & X_3 \ar[r]^{f_3} & \cdots \ar[r]^{f_{n-1}}& X_n \ar[r]^{f_n} & \Sigma X_1 \ar@{=}[d]\\
X_1 \ar[r]^{g_1} & Y_2 \ar[r]^{g_2} & Y_3 \ar[r]^{g_3} & \cdots \ar[r]^{g_{n-1}} & Y_n \ar[r]^{g_n}& \Sigma X_1\\
}$$  whose rows are $n$-angles. By (c), there exists a commutative diagram
$$\xymatrix{
X_1 \ar[r]^{f_1}\ar@{=}[d] & X_2 \ar[r]^{f'_2}\ar[d]^{\varphi_2} & X'_3 \ar[r]^{f'_3}\ar[d]^{\varphi'_3} & \cdots \ar[r]^{f'_{n-1}}& X'_n \ar[r]^{f'_n}\ar[d]^{\varphi'_n} & \Sigma X_1 \ar@{=}[d]\\
X_1 \ar[r]^{g_1} & Y_2 \ar[r]^{g'_2} & Y'_3 \ar[r]^{g'_3} & \cdots \ar[r]^{g'_{n-1}} & Y'_n \ar[r]^{g'_n}& \Sigma X_1\\
}$$
such that each row is an $n$-angle and $$\xymatrix{
 X_2 \ar[r]^{f'_2}\ar[d]^{\varphi_2} & X'_3 \ar[r]^{f'_3}\ar[d]^{\varphi'_3} & \cdots \ar[r]^{f'_{n-1}}& X'_n \ar[d]^{\varphi'_n} \\
 Y_2 \ar[r]^{g'_2} & Y'_3 \ar[r]^{g'_3} & \cdots \ar[r]^{g'_{n-1}} & Y'_n \\
}$$ is a homotopy cartesian diagram where $\Sigma f_1\cdot g'_n$ is the differential. It follows from (N3) that we have the following commutative diagram
$$\xymatrix{
X_\bullet\ar[d]^{\theta_\bullet} & X_1 \ar[r]^{f_1}\ar@{=}[d] & X_2 \ar[r]^{f_2}\ar@{=}[d] & X_3 \ar[r]^{f_3}\ar[d]^{\theta_3} & \cdots \ar[r]^{f_{n-1}}& X_n \ar[r]^{f_n}\ar[d]^{\theta_n} & \Sigma X_1 \ar@{=}[d]\\
X'_\bullet\ar[d]^{\varphi'_\bullet} & X_1 \ar[r]^{f_1}\ar@{=}[d] & X_2 \ar[r]^{f'_2}\ar[d]^{\varphi_2} & X'_3 \ar[r]^{f'_3}\ar[d]^{\varphi'_3} & \cdots \ar[r]^{f'_{n-1}}& X'_n \ar[r]^{f'_n}\ar[d]^{\varphi'_n} & \Sigma X_1 \ar@{=}[d]\\
Y'_\bullet\ar[d]^{\psi_\bullet}& X_1 \ar[r]^{g_1}\ar@{=}[d] & Y_2 \ar[r]^{g'_2}\ar@{=}[d] & Y'_3 \ar[r]^{g'_3}\ar[d]^{\psi_3} & \cdots \ar[r]^{g'_{n-1}} & Y'_n \ar[r]^{g'_n} \ar[d]^{\psi_n}& \Sigma X_1\ar@{=}[d]\\
Y_\bullet & X_1 \ar[r]^{g_1} & Y_2 \ar[r]^{g_2} & Y_3 \ar[r]^{g_3} & \cdots \ar[r]^{g_{n-1}} & Y_n \ar[r]^{g_n}& \Sigma X_1\\
}$$ 
whose rows are $n$-angles. Lemma \ref{2} and its dual imply that
$$\xymatrix{
 X_2 \ar[r]^{f_2}\ar[d]^{\varphi_2} & X_3 \ar[r]^{f_3}\ar[d]^{\psi_3\varphi'_3\theta_3} & \cdots \ar[r]^{f_{n-1}}& X_n \ar[d]^{\psi_n\varphi'_n\theta_n} \\
 Y_2 \ar[r]^{g_2} & Y_3 \ar[r]^{g_3} & \cdots \ar[r]^{g_{n-1}} & Y_n \\
}$$ is a homotopy cartesian diagram and $\Sigma f_1\cdot g_n$ is the differential.
\end{proof}

\begin{rem}
The axiom (N4-1) and axiom (N4-2) are the higher analogues of  homotopy cartesian axiom and cobase change axiom respectively. We can see \cite{[Ne],[Kr],[Be]} for reference.
\end{rem}

Now we will use the higher homotopy cartesian axiom (N4-1) to explain the higher octahedral axiom (N4$^*$).

\begin{thm}\label{thm3}
Let $(\mathcal{C},\Sigma,\Theta)$ be a pre-$n$-angulated category. Then the following statements are equivalent.

 (a) $\Theta$ satisfies (N4-1).

 (b) $\Theta$ satisfies (N4-4):

 Given the following commutative diagram
 $$\xymatrix{
X_1 \ar[r]^{f_1}\ar@{=}[d] & X_2 \ar[r]^{f_2}\ar[d]^{\varphi_2} & X_3 \ar[r]^{f_3} & \cdots \ar[r]^{f_{n-1}}& X_n \ar[r]^{f_n} & \Sigma X_1 \ar@{=}[d]\\
X_1 \ar[r]^{g_1}\ar[d]^{f_1} & Y_2 \ar[r]^{g_2}\ar@{=}[d] & Y_3 \ar[r]^{g_3} & \cdots \ar[r]^{g_{n-1}} & Y_n \ar[r]^{g_n}& \Sigma X_1\ar[d]^{\Sigma f_1}\\
X_2 \ar[r]^{\varphi_2} & Y_2 \ar[r]^{h_2} & Z_3 \ar[r]^{h_3} & \cdots \ar[r]^{h_{n-1}} & Z_n \ar[r]^{h_n}& \Sigma X_2\\
}$$ whose rows are $n$-angles, there exist morphisms $\varphi_i: X_i\rightarrow Y_i$ for $3\leq i\leq n$, $\psi_j:Y_j\rightarrow Z_j$ for $3\leq j\leq n$ and $\theta_k:X_k\rightarrow Z_{k-1}$ for $4\leq k\leq n$ such that the following diagram
$$\xymatrixcolsep{4pc}\xymatrix{
X_1 \ar[r]^{f_1}\ar@{=}[d] & X_2 \ar[r]^{f_2}\ar[d]^{\varphi_2} & X_3 \ar[r]^{f_3} \ar@{-->}[d]^{\varphi_3} & X_4 \ar[r]^{f_4} \ar@{-->}[d]^{\varphi_4} &\cdots \\
X_1 \ar[r]^{g_1}\ar[d]^{f_1} & Y_2 \ar[r]^{g_2}\ar[d]^{\left(
             \begin{smallmatrix}
               0 \\
               1 \\
             \end{smallmatrix}
           \right)} & Y_3 \ar[r]^{g_3}\ar[d]^{\left(
             \begin{smallmatrix}
               0 \\
               1 \\
             \end{smallmatrix}
           \right)} & Y_4 \ar[r]^{g_4} \ar[d]^{\left(
             \begin{smallmatrix}
               0 \\
               1 \\
             \end{smallmatrix}
           \right)} &\cdots \\
X_2\ar[r]^{\left(
             \begin{smallmatrix}
               -f_2 \\
               \varphi_2 \\
             \end{smallmatrix}
           \right)
} \ar@{=}[d] & X_3\oplus Y_2\ar[r]^{\left(
             \begin{smallmatrix}
              -f_3 & 0 \\
               \varphi_3 & g_2 \\
             \end{smallmatrix}
           \right)}\ar[d]^{\left(
             \begin{smallmatrix}
             0 & 1 \\
             \end{smallmatrix}
           \right)} & X_4\oplus Y_3 \ar[r]^{\left(
             \begin{smallmatrix}
              -f_4 & 0 \\
               \varphi_4 & g_3 \\
             \end{smallmatrix}
           \right)} \ar@{-->}[d]^{\left(
             \begin{smallmatrix}
             \theta_4 & \psi_3 \\
             \end{smallmatrix}
           \right)} & X_5\oplus Y_4 \ar[r]^{\left(
             \begin{smallmatrix}
              -f_5 & 0 \\
               \varphi_5 & g_4 \\
             \end{smallmatrix}
           \right)} \ar@{-->}[d]^{\left(
             \begin{smallmatrix}
             \theta_5 & \psi_4 \\
             \end{smallmatrix}
           \right)} & \cdots \\
X_2 \ar[r]^{\varphi_2}\ar[d]^{-f_2} & Y_2 \ar[r]^{h_2} \ar[d]^{\left(
             \begin{smallmatrix}
             0 \\
             g_2\\
             \end{smallmatrix}
           \right)}& Z_3 \ar[r]^{h_3}\ar[d]^{\left(
             \begin{smallmatrix}
             0 \\
             0\\
              1 \\
             \end{smallmatrix}
           \right)} & Z_4 \ar[r]^{h_4}\ar[d]_{\left(
             \begin{smallmatrix}
             0 \\
             0\\
             1 \\
             \end{smallmatrix}
           \right)} & \cdots \\
X_3\ar[r]^{\left(
             \begin{smallmatrix}
             f_3 \\
             -\varphi_3\\
             \end{smallmatrix}
           \right)} & X_4\oplus Y_3 \ar[r]^{\left(
             \begin{smallmatrix}
             f_4 & 0 \\
             -\varphi_4 & -g_3\\
              \theta_4 & \psi_3\\
             \end{smallmatrix}
           \right)}  & X_5\oplus Y_4\oplus Z_3 \ar[r]^{\left(
             \begin{smallmatrix}
             f_5 & 0 & 0 \\
             -\varphi_5 & -g_4 & 0 \\
             \theta_5 & \psi_4 & h_3\\
             \end{smallmatrix}
           \right)}  & X_6\oplus Y_5\oplus Z_4 \ar[r]^{\left(
             \begin{smallmatrix}
             f_6 & 0 & 0 \\
             -\varphi_6 & -g_5 & 0 \\
             \theta_6 & \psi_5 & h_4\\
             \end{smallmatrix}
           \right)}  & \cdots \\
}$$
$$\begin{gathered}\xymatrixcolsep{4pc}\xymatrix{
 \cdots \ar[r]^{f_{n-3}} & X_{n-2} \ar[r]^{f_{n-2}} \ar@{-->}[d]^{\varphi_{n-2}}& X_{n-1}\ar[r]^{f_{n-1}}\ar@{-->}[d]^{\varphi_{n-1}}& X_n \ar[r]^{f_n}\ar@{-->}[d]^{\varphi_{n}} & \Sigma X_1 \ar@{=}[d]\\
 \cdots \ar[r]^{g_{n-3}} & Y_{n-2} \ar[r]^{g_{n-2}} \ar[d]^{\left(
             \begin{smallmatrix}
               0 \\
               1 \\
             \end{smallmatrix}
           \right)} & Y_{n-1} \ar[r]^{g_{n-1}}\ar[d]^{\left(
             \begin{smallmatrix}
               0 \\
               1 \\
             \end{smallmatrix}
           \right)} & Y_n \ar[r]^{g_n} \ar@{=}[d]& \Sigma X_1\ar[d]^{\Sigma f_1}\\
  \cdots \ar[r]^{\left(
             \begin{smallmatrix}
              -f_{n-2} & 0 \\
               \varphi_{n-2} & g_{n-3} \\
             \end{smallmatrix}
           \right)}& X_{n-1}\oplus Y_{n-2} \ar[r]^{\left(
             \begin{smallmatrix}
              -f_{n-1} & 0 \\
               \varphi_{n-1} & g_{n-2} \\
             \end{smallmatrix}
           \right)} \ar@{-->}[d]^{\left(
             \begin{smallmatrix}
               \theta_{n-1}& \psi_{n-2} \\
             \end{smallmatrix}
           \right)} & X_{n}\oplus Y_{n-1} \ar[r]^{(\varphi_n\ g_{n-1})}\ar@{-->}[d]^{\left(
             \begin{smallmatrix}
               \theta_{n}& \psi_{n-1} \\
             \end{smallmatrix}
           \right)} & Y_n \ar[r]^{\Sigma f_1\cdot g_n} \ar@{-->}[d]^{\psi_n} & \Sigma X_2 \ar@{=}[d] \\
 \cdots \ar[r]^{h_{n-3}} & Z_{n-2}\ar[r]^{h_{n-2}}\ar[d]^{\left(
             \begin{smallmatrix}
             0 \\
             0\\
             1\\
             \end{smallmatrix}
           \right)}& Z_{n-1}\ar[r]^{h_{n-1}}\ar[d]^{\left(
             \begin{smallmatrix}
             0 \\
             1\\
             \end{smallmatrix}
           \right)} & Z_n \ar[r]^{h_n}\ar@{=}[d]& \Sigma X_2 \ar[d]^{-\Sigma f_2}\\
 \cdots  \ar[r]^{\left(
             \begin{smallmatrix}
             f_{n-1} & 0 & 0 \\
             -\varphi_{n-1} & -g_{n-2} & 0 \\
             \theta_{n-1} & \psi_{n-2} & h_{n-3}\\
             \end{smallmatrix}
           \right)}  & X_n\oplus Y_{n-1}\oplus Z_{n-2} \ar[r]^{\left(
             \begin{smallmatrix}
             -\varphi_{n} & -g_{n-1} & 0 \\
             \theta_{n} & \psi_{n-1} & h_{n-2}\\
             \end{smallmatrix}
           \right)} & Y_n\oplus Z_{n-1} \ar[r]^{\left(
             \begin{smallmatrix}
             \psi_n & h_{n-1} \\
             \end{smallmatrix}
           \right)} & Z_n \ar[r]^{-\Sigma f_2\cdot h_n} & \Sigma X_3\\
    }\end{gathered}\eqno{(3.1)}$$
is commutative where each row is an $n$-angle.

(c) $\Theta$ satisfies (N4-5):

Given two  morphisms $f_1 :X_1\rightarrow X_2$ and $\varphi_2:X_2\rightarrow Y_2$, there exists a commutative diagram (3.1)
such that each row is an $n$-angle.
\end{thm}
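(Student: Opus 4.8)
The plan is to establish the cycle of implications (a) $\Rightarrow$ (b) $\Rightarrow$ (c) $\Rightarrow$ (a), with almost all of the work concentrated in (a) $\Rightarrow$ (b); the other two implications are short deductions from results already in hand. Throughout, since Theorem \ref{thm1} gives that (N4-1) is equivalent to (N4), whenever (a) is assumed I will freely use (N4) together with (N3) to complete morphisms of $n$-angles and to form mapping cones that are again $n$-angles, and I will use Lemma \ref{1} to split those mapping cones into a homotopy cartesian summand and a contractible one.

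For (a) $\Rightarrow$ (b) I would read (3.1) as the outcome of two successive homotopy cartesian constructions. The three rows of the hypothesis diagram are $n$-angles on $f_1$, on $g_1=\varphi_2 f_1$, and on $\varphi_2$; these are the first, second and fourth rows of (3.1), with object sequences $X_\bullet$, $Y_\bullet$, $Z_\bullet$. First, by (N3) complete $\varphi_2$ to a morphism of $n$-angles $\varphi_\bullet\colon X_\bullet\to Y_\bullet$; this produces $\varphi_3,\dots,\varphi_n$ and the commutativity of every square between the first two rows, including the rightmost relation $g_n\varphi_n=f_n$. By (N4) the mapping cone $C(\varphi_\bullet)$ is an $n$-angle, and by Lemma \ref{1} its homotopy cartesian summand is precisely the third row, with differential $\Sigma f_1\cdot g_n$; hence that row is an $n$-angle by (N1)(a). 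Second, the projection $\left(\begin{smallmatrix}0&1\end{smallmatrix}\right)\colon X_3\oplus Y_2\to Y_2$ satisfies $\left(\begin{smallmatrix}0&1\end{smallmatrix}\right)\left(\begin{smallmatrix}-f_2\\\varphi_2\end{smallmatrix}\right)=\varphi_2$, so by (N3) it extends to a morphism of $n$-angles from the third row to the fourth row; I \emph{define} its completing components to be $\left(\begin{smallmatrix}\theta_{k}&\psi_{k-1}\end{smallmatrix}\right)$ and $\psi_n$, which at once produces all of $\theta_4,\dots,\theta_n$ and $\psi_3,\dots,\psi_n$ and the commutativity of every square between the third and fourth rows. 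Applying (N4) and Lemma \ref{1} once more, the homotopy cartesian sequence of this second pair is an $n$-angle. Writing it out, it differs from the fifth row of (3.1) only by a single trivial summand $Y_2\xrightarrow{1}Y_2\to 0\to\cdots\to\Sigma Y_2$ carried in its first two and its last terms; a triangular change of basis clearing the $-g_2$ and $h_2$ entries of the $Y_2$-column---the manipulation already used in the proof of Lemma \ref{1}---exhibits it as the direct sum of the fifth row and this trivial $n$-angle. Thus the fifth row is an $n$-angle by (N1)(a) and its last morphism collapses to $-\Sigma f_2\cdot h_n$. Finally the connecting maps between the second and third and between the fourth and fifth rows are the canonical inclusions into the two homotopy cartesian sequences, and the leftmost connecting maps are $f_1$ and $-f_2$; each resulting square reduces to a one-line matrix identity using $f_{i+1}f_i=0$ and $\varphi_{i+1}f_i=g_i\varphi_i$.

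For (b) $\Rightarrow$ (c), given only $f_1$ and $\varphi_2$, use (N1)(c) to complete $f_1$, $\varphi_2$ and the composite $g_1:=\varphi_2 f_1$ to $n$-angles; together with the identity morphisms these assemble into exactly the commutative diagram hypothesised by (N4-4), whose leftmost squares commute because $g_1=\varphi_2 f_1$. Applying (N4-4) then yields (3.1), which is the assertion of (N4-5). For (c) $\Rightarrow$ (a), apply (N4-5) to $f_1$ and $\varphi_2$ and retain only the first two rows of (3.1) together with $\varphi_3,\dots,\varphi_n$: this is precisely a homotopy cartesian diagram on $X_2\to\cdots\to X_n$ over $Y_2\to\cdots\to Y_n$ with differential $\Sigma f_1\cdot g_n$, that is, the conclusion of (N4-3), and Theorem \ref{thm2} gives (N4-3) $\Rightarrow$ (N4-1).

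The main obstacle is (a) $\Rightarrow$ (b): there is no single hard idea, but one must match the completing morphisms delivered abstractly by (N3) and (N4) with the prescribed block entries $\varphi_i,\psi_j,\theta_k$ of (3.1), keep careful track of all the signs, and carry out the two splittings so that the two unwieldy homotopy cartesian sequences collapse to the third and fifth rows exactly as displayed.
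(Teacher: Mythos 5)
Your proposal is correct and takes essentially the same route as the paper: your (a) $\Rightarrow$ (b) is precisely the paper's two successive applications of (N4-1) (which you legitimately unwind through (N3), (N4) and Lemma \ref{1}, the same mechanism the paper uses to prove Theorem \ref{thm1}), with the second homotopy cartesian $n$-angle splitting as the fifth row of (3.1) plus the trivial $n$-angle on $Y_2$, exactly as the paper states. Your treatment of (c) --- producing the hypothesis $n$-angles via (N1)(c) and truncating (3.1) to recover the conclusion of (N4-3), then invoking Theorem \ref{thm2} --- is the same reduction the paper makes, merely organized as the cycle (a) $\Rightarrow$ (b) $\Rightarrow$ (c) $\Rightarrow$ (a) instead of two separate equivalences.
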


\begin{proof}
(a) $\Rightarrow$ (b).  By (N4-1), there exist morphisms $\varphi_i: X_i\rightarrow Y_i$ for $3\leq i\leq n$ such that the diagram (3.1) involving the first two rows is commutative and the third row is the $n$-angle given by the homotopy cartesian diagram. By (N4-1) again, there exist morphisms $\psi_j:Y_j\rightarrow Z_j$ for $3\leq j\leq n$ and $\theta_k:X_k\rightarrow Z_{k-1}$ for $4\leq k\leq n$ such that the diagram involving the third row and the fourth row is commutative, and the $n$-angle given by the homotopy cartesian diagram is the direct sum of the fifth row and the trivial $n$-angle on $Y_2$. Other communicative squares are trivial.

(b) $\Rightarrow$ (a). It is clear.

Since axiom (N4-1) is equivalent to axiom (N4-3) by Theorem \ref{thm2}, the proof of (a) $\Leftrightarrow$ (b) implies (a) $\Leftrightarrow$ (c).
\end{proof}

\begin{rem}
If we replace $\theta_k$ with $(-1)^{k+1}\phi_k$ for $4\leq k\leq n$ in the last row of diagram (3.1), then we have the following  isomorphism of $n$-angles 
$$\xymatrixcolsep{4pc}\xymatrixrowsep{4.7pc}\xymatrix{
X_3\ar[r]^{\left(
             \begin{smallmatrix}
             f_3 \\
             -\varphi_3\\
             \end{smallmatrix}
           \right)}\ar@{=}[d] & X_4\oplus Y_3 \ar[r]^{\left(
             \begin{smallmatrix}
             f_4 & 0 \\
             -\varphi_4 & -g_3\\
              -\phi_4 & \psi_3\\
             \end{smallmatrix}
           \right)\ \ } \ar[d]^{\left(
             \begin{smallmatrix}
             1 & 0 \\
             0 & -1 \\
             \end{smallmatrix}
           \right)} & X_5\oplus Y_4\oplus Z_3 \ar[r]^{\left(
             \begin{smallmatrix}
             f_5 & 0 & 0 \\
             -\varphi_5 & -g_4 & 0 \\
             \phi_5 & \psi_4 & h_3\\
             \end{smallmatrix}
           \right)} \ar[d]^{\left(
             \begin{smallmatrix}
             -1 & 0 & 0 \\
             0 & -1 & 0 \\
             0 & 0 & -1 \\
             \end{smallmatrix}
           \right)} & X_6\oplus Y_5\oplus Z_4 \ar[r]^{\ \ \ \left(
             \begin{smallmatrix}
             f_6 & 0 & 0 \\
             -\varphi_6 & -g_5 & 0 \\
             -\phi_6 & \psi_5 & h_4\\
             \end{smallmatrix}
           \right)} \ar[d]^{\left(
             \begin{smallmatrix}
             1 & 0 & 0 \\
             0 & -1 & 0 \\
             0 & 0 & -1\\
             \end{smallmatrix}
           \right)} & \cdots \\
 X_3\ar[r]^{\left(
             \begin{smallmatrix}
             f_3 \\
             \varphi_3\\
             \end{smallmatrix}
           \right)} & X_4\oplus Y_3 \ar[r]^{\left(
             \begin{smallmatrix}
             -f_4 & 0 \\
             \varphi_4 & -g_3\\
              \phi_4 & \psi_3\\
             \end{smallmatrix}
           \right)}  & X_5\oplus Y_4\oplus Z_3 \ar[r]^{ \left(
             \begin{smallmatrix}
             -f_5 & 0 & 0 \\
             -\varphi_5 & -g_4 & 0 \\
             \phi_5 & \psi_4 & h_3\\
             \end{smallmatrix}
           \right)}  & X_6\oplus Y_5\oplus Z_4 \ar[r]^{\ \ \ \ \left(
             \begin{smallmatrix}
             -f_6 & 0 & 0 \\
             \varphi_6 & -g_5 & 0 \\
             \phi_6 & \psi_5 & h_4\\
             \end{smallmatrix}
           \right)}  & \cdots \\}$$
 $$\xymatrixcolsep{4pc}\xymatrixrowsep{4.5pc}\xymatrix{
  \cdots  \ar[r]^{\left(
             \begin{smallmatrix}
             f_{n-1} & 0 & 0 \\
             -\varphi_{n-1} & -g_{n-2} & 0 \\
             (-1)^n\phi_{n-1} & \psi_{n-2} & h_{n-3}\\
             \end{smallmatrix}
           \right)}  & X_n\oplus Y_{n-1}\oplus Z_{n-2} \ar[r]^{\left(
             \begin{smallmatrix}
             -\varphi_{n} & -g_{n-1} & 0 \\
             (-1)^{n+1}\phi_{n} & \psi_{n-1} & h_{n-2}\\
             \end{smallmatrix}
           \right)}\ar[d]^{\left(
             \begin{smallmatrix}
             (-1)^n & 0 & 0\\
             0 & -1 & 0\\
             0 & 0 & -1\\
             \end{smallmatrix}
           \right)} & Y_n\oplus Z_{n-1} \ar[r]^{\ \ \left(
             \begin{smallmatrix}
             \psi_n & h_{n-1} \\
             \end{smallmatrix}
           \right)}\ar[d]^{\left(
             \begin{smallmatrix}
             -1 & 0\\
             0 & -1\\
             \end{smallmatrix}
           \right)} & Z_n \ar[r]^{-\Sigma f_2\cdot h_n}\ar[d]^{-1} & \Sigma X_3\ar@{=}[d]\\
   \cdots  \ar[r]^{\left(
             \begin{smallmatrix}
             -f_{n-1} & 0 & 0 \\
             (-1)^{n-1}\varphi_{n-1} & -g_{n-2} & 0 \\
             \phi_{n-1} & \psi_{n-2} & h_{n-3}\\
             \end{smallmatrix}
           \right)\ \ \ \ }  & X_n\oplus Y_{n-1}\oplus Z_{n-2} \ar[r]^{\left(
             \begin{smallmatrix}
             (-1)^n\varphi_{n} & -g_{n-1} & 0 \\
             \phi_{n} & \psi_{n-1} & h_{n-2}\\
             \end{smallmatrix}
           \right)} & Y_n\oplus Z_{n-1} \ar[r]^{\ \ \left(
             \begin{smallmatrix}
             \psi_n & h_{n-1} \\
             \end{smallmatrix}
           \right)} & Z_n \ar[r]^{\Sigma f_2\cdot h_n} & \Sigma X_3\\
   }$$
 where the second row is the $n$-angle (2.1) given by higher octahedral axiom (N4$^\ast$). Indeed, (N4-4) is nothing but the proof of (N4) implying (N4$^*$). Moreover, by (N4-4) we can obtain the morphisms of $n$-angles hidden in (N4$^\ast$).
\end{rem}


\begin{thebibliography} {9}
\bibitem{[ABT]} E. Arentz-Hansen, P. A. Bergh, M. Thaule, The morphism axiom for $n$-angulated categories, {\em Theory and Applications of Categories}, 2016, {\bf 31} (18): 477-483.
\bibitem{[Be]} A. Beligiannis, Relative homological algebra and purity in triangulated Categories, {\em J. Algebra}, 2000, {\bf 227}: 268-361.
\bibitem{[BT1]} P. A. Bergh, M. Thaule, The axioms for $n$-angulated categories, {\em Algebr. Geom. Topol.} 2013, {\bf 13}(4): 2405-2428.
\bibitem{[BJT]}P. A. Bergh, G. Jasso, M. Thaule, Higher $n$-angulations from local rings, {\em J. Lond. Math. Soc}. 2016, {\bf 93}(1): 123-142.
\bibitem{[GKO]} C. Geiss, B. Keller, S. Oppermann, $n$-angulated categories, {\em J. Reine Angew. Math}.  2013, {\bf 675}: 101-120.
\bibitem{[J]}G. Jasso, $n$-abelian and $n$-exact categories,{\em Math. Z.} 2016, {\bf 283}(3): 703-759.
\bibitem{[Kr]} H. Krause, Derived categories, resolutions, and Brown representability, in: {\em Interactions Between Homotopy Theory
and Algebra}, 101-139, in: {\em Contemp. Math.} {\bf 436}, Amer. Math. Soc. Providence, RI, 2007.
\bibitem{[Ne]} A. Neeman, Triangulated Categories, {\em Annals of Mathematics Studies}, {\bf 148}, Princeton University Press, Princeton, NJ, 2001.
\bibitem{[PS]} B. J. Parshall, L. L. Scott, Derived categories, quasi-hereditary algebras, and algebraic groups, {\em Carleton U. Math. Notes}, 1988, {\bf 3}: 1-144.

\end{thebibliography}
\end{document}